\definecolor{refkey}{gray}{.5}   
\definecolor{labelkey}{gray}{.5} 
\numberwithin{equation}{section}
\newcommand{\R}{{\mathbb R}}
\newcommand{\C}{{\mathbb C}}
\newcommand{\Z}{{\mathbb Z}}
\newcommand{\K}{{\mathbb K}}
\newtheorem{theo}{{\sc \bf Theorem}}[section]
\newtheorem{cor}[theo]{{\sc \bf Corollary}}
\newtheorem{lem}[theo]{{\sc \bf Lemma}}
\newtheorem{prop}[theo]{{\sc \bf Proposition}}
\begin{document}

\title{Derivations and Reflection Positivity on the Quantum cylinder}

\author[Klimek]{Slawomir Klimek}
\address{Department of Mathematical Sciences,
Indiana University-Purdue University Indianapolis,
402 N. Blackford St., Indianapolis, IN 46202, U.S.A.}
\email{sklimek@math.iupui.edu}

\author[McBride]{Matt McBride}
\address{Department of Mathematics and Statistics,
Mississippi State University,
175 President's Cir., Mississippi State, MS 39762, U.S.A.}
\email{mmcbride@math.msstate.edu}

\thanks{}

\date{\today}

\begin{abstract}
We describe the general structure of unbounded derivations in the quantum cylinder. We prove a noncommutative analog of reflection positivity for Laplace-type operators in a noncommutative cylinder following the ideas of Jaffe and Ritter proof of reflection positivity for Laplace operators on manifolds equipped with a reflection. 
\end{abstract}

\maketitle
\section{Introduction}

Part of this work is a continuation of the program started in \cite{KMRSW1} and \cite{KMR} of studying unbounded derivations in quantum domains,
their implementations, and possible spectral triples associated to them.
Another part of this work was inspired by a Glimm and Jaffe note \cite{GJ} on reflection positivity for the Laplace operator in $\R^n$. Additionally, we were influenced by the Jaffe and Ritter paper \cite{JR}, which considered reflection positivity for Laplace operators on manifolds equipped with a reflection. 

Reflection positivity in the Euclidean space is the following remarkable inequality in $L^2(\R^n)$:
\begin{equation*}
\langle \Theta f, \left(-\Delta+ m^2\right)^{-1}f\rangle \ge 0 
\end{equation*}
for all $f\in \mathcal{H}^+ = \{f\in L^2(\R^n) : f(x_1,\ldots,x_n) = 0,\textrm{ for }x_1<0\}$. Here $\Delta$ is the Laplace operator in $\R^n$, $m$ is a constant and $\Theta: L^2(\R^n)\to  L^2(\R^n)$ is the reflection in the first coordinate: 
$$\Theta f(x_1,\ldots,x_{n})=f(-x_1,\ldots,x_{n}).$$
This inequality is a key step in proving the Reflection Positivity Axiom of Osterwalder-Schrader for the free field, see \cite{GJbook}. Reflection positivity has been generalized in many directions, of particular interest for this note is already mentioned manifold generalization in \cite{JR}.

A natural question is then if such ideas can be extended to noncommutative geometry to include examples of Laplace-type operators on noncommutative manifolds. One of the simplest possibilities, studied in detail in this paper, is a quantum cylinder which classically has a natural reflection through the middle. 

The noncommutative cylinder (quantum annulus) was constructed in \cite{KL3} and further studied in \cite{KMRS} and \cite{KMR}. It has a natural rotational symmetry as well as a reflection, as will be shown later, and it also has an analog of the Lebesgue measure. To define a class of interesting, reflection invariant, Laplace-type operators in the corresponding Hilbert space we use rotationally covariant unbounded derivations on the quantum cylinder that we studied in \cite{KMRSW1} and \cite{KMR}. With proper choices we indeed get a reflection positivity for such analogs of Laplace operators with the proof following closely the ideas in \cite{JR}. 

When working with noncommutative algebras, we tried to make our constructions as geometrical as possible. Every step was carefully motivated by the corresponding classical geometry concepts and their noncommutative versions.

The paper is organized as follows.  In Section 2 we review the quantum cylinder and discuss its geometric  Hilbert space constructed from an invariant weight playing the role of the Lebesgue measure.  We also discuss the reflection operator necessary for the reflection positivity for the Laplace-type operators we consider.  In Section 3 we classify all unbounded derivations on the quantum cylinder that arise from the dense subalgebra considered in Section 2.  Finally in Section 4 we show how to implement those derivations which are invariant and covariant and create Laplace-type operators from those implementations.  Moreover we prove the reflection positivity for such a class of Laplace-type operators.

\section{Quantum cylinder}

In this section we describe the structure and the geometry of the noncommutative cylinder, which is defined here as a concrete  C$^*-$algebra of operators in $\ell^2(\Z)$. To introduce it we need some notation.

Let $\{E_k\}$ be the canonical basis for $\ell^2(\Z)$ and let $U$ be the bilateral shift, i.e. 
$$UE_k = E_{k+1}.$$  
We use the diagonal label operator: 
$$\K E_{k} = kE_{k},$$ 
so that, for a bounded function $a: \Z \to\C$, we can write: 
$$a(\K) E_{k} = a(k)E_{k}.$$  
In a sense the operators $U$, $\K$ are noncommutative polar coordinates. We have the following crucial commutation relation for a diagonal operator $a(\K)$:\begin{equation}\label{CommRel}
a(\K)\,U = Ua(\K +1).
\end{equation}

Let $c(\Z)$ be the space of convergent sequences, and consider the abelian algebra: 
$$A_{diag} = \left\{a(\K)  : \{a(k)\}\in c(\Z) \right\}.$$   
We define the quantum cylinder as the C$^*-$algebra generated by $U$ and the above diagonal operators:
$$A=C^*(A_{diag}, U).$$
Because of formula \eqref{CommRel}, we can view the quantum cylinder as the group crossed product of $A_{diag}$ with $\Z$ acting on $A_{diag}$  via shifts (translation by $n\in\Z$), that is: \begin{equation*}
A = A_{diag} \rtimes_{shift}\Z.
\end{equation*}
Also, we see that $(A,  A_{diag})$ is a Cartan pair \cite{R}.

Alternatively, the quantum cylinder can be described as a singly generated C$^*-$algebra. Consider the following special weighted bilateral shift:\begin{equation*}
U_rE_k = \left\{
\begin{aligned}
rE_{k+1} & \quad k<0 \\
E_{k+1} & \quad k\ge 0,
\end{aligned}\right.
\end{equation*}
$0<r<1$.
We have that $A = C^*(U_r)$, the unital C$^*-$algebra generated by $U_r$, \cite{KMRS}. Additional arguments in references  \cite{KL3}, \cite{KM1}, and \cite{KMRS} further justify that this noncommutative $C^*-$algebra is an appropriate choice to be called the quantum cylinder.   Its structure can be described by the following short exact sequence:\begin{equation*}
0\longrightarrow \mathcal{K} \longrightarrow A  \longrightarrow C(S^1)\oplus C(S^1) \longrightarrow 0,
\end{equation*}
where $\mathcal{K}$ is the ideal of compact operators in $\ell^2(\Z)$ which, in fact, is the commutator ideal of the algebra $A$.   

As in \cite{KMR}, we call a function $a:\Z \to\C$ {\it eventually constant}, if there exists a natural number $k_0$ such that $a(k)$ are constants for $|k|\ge k_0$.  The constants are denoted by $a(\pm\infty)$. The set of all such functions will be denoted by $c_{00}^\pm(\Z)$. 

Let $Pol(U_r, U_r^*)$ be the set of all polynomials in $U_r$ and $U_r^*$. Alternatively, we have a natural dense algebra of ``algebraic" elements of $A$ defined as:\begin{equation*}
\mathcal{A} = \left\{a = \sum_{n}U^n a_n(\K) \ : \ a(k)\in c_{00}^\pm(\Z),\textrm{ finite sum}\right\}.
\end{equation*}
It was proved in \cite{KMR} that $\mathcal{A} = Pol(U_r, U_r^*)$, which allows us to work with the unitary $U$ and diagonal operators, as dealing with $U_r$ is more difficult. In fact, we have the following useful identification.

\begin{prop}\label{useful_gens}
$\mathcal{A}$ is equal to the algebra of polynomial generated by $U$, $U^{-1}$, $P_{\ge0}$, and $P_0$, where $P_0$ is the orthogonal projection onto span$\{E_0\}$, while $P_{\ge0}$ is the orthogonal projection onto span$\{E_k, k\ge0\}$.
\end{prop}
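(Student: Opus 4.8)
The plan is to prove the two inclusions separately. Let $\mathcal{B}$ denote the $*$-algebra of polynomials generated by $U$, $U^{-1}$, $P_{\ge 0}$, and $P_0$. The easy direction is $\mathcal{B}\subseteq\mathcal{A}$: since $U,U^{-1}\in\mathcal{A}$ trivially, it suffices to check that $P_{\ge 0}$ and $P_0$ lie in $\mathcal{A}$. But $P_0 = a(\K)$ with $a = \chi_{\{0\}}$, which is eventually constant (equal to $0$ for $|k|$ large), so $a\in c_{00}^\pm(\Z)$; similarly $P_{\ge 0} = b(\K)$ with $b = \chi_{\{k\ge 0\}}$, which is eventually constant ($0$ for $k\to-\infty$, $1$ for $k\to+\infty$), so $b\in c_{00}^\pm(\Z)$. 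Since $\mathcal{A}$ is an algebra containing $U$, $U^{-1}$ and all such $a(\K)$, we get $\mathcal{B}\subseteq\mathcal{A}$.

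For the reverse inclusion $\mathcal{A}\subseteq\mathcal{B}$, a general element of $\mathcal{A}$ is a finite sum $\sum_n U^n a_n(\K)$ with each $a_n\in c_{00}^\pm(\Z)$, so since $\mathcal{B}$ already contains all positive and negative powers of $U$, it is enough to show that every diagonal operator $a(\K)$ with $a\in c_{00}^\pm(\Z)$ belongs to $\mathcal{B}$. First I would observe that $\mathcal{B}$ contains the rank-one projection $P_k$ onto $\mathrm{span}\{E_k\}$ for every $k\in\Z$: indeed $P_k = U^k P_0 U^{-k}$, using the commutation relation \eqref{CommRel} (equivalently, $U^k E_0 = E_k$). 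Next, writing $P_{\ge 0}$ and noting $U P_{\ge 0} U^{-1} = P_{\ge 1}$, one gets $P_{\ge n}\in\mathcal{B}$ for all $n\in\Z$, hence also $P_{<n} = I - P_{\ge n}\in\mathcal{B}$ — in particular $I\in\mathcal{B}$, and by taking differences of the $P_{\ge n}$'s one recovers each $P_k$ again. Then for $a\in c_{00}^\pm(\Z)$ with constants $a(\pm\infty)$ and $a(k) = a(\pm\infty)$ for $|k|\ge k_0$, I would write
\begin{equation*}
a(\K) = a(-\infty)\,P_{<-k_0} + \sum_{|k|< k_0} a(k)\,P_k + a(+\infty)\,P_{\ge k_0},
\end{equation*}
a finite $\mathcal{B}$-linear combination of projections already shown to lie in $\mathcal{B}$; hence $a(\K)\in\mathcal{B}$, and therefore $\mathcal{A}\subseteq\mathcal{B}$.

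The only subtlety — and the step I would treat most carefully — is making sure every operator manipulation stays inside the \emph{polynomial} algebra $\mathcal{B}$ rather than some larger closure: all the identities used ($P_k = U^k P_0 U^{-k}$, $P_{\ge n} = U^n P_{\ge 0} U^{-n}$, $P_{<n} = I - P_{\ge n}$, and the finite decomposition of $a(\K)$) involve only finitely many additions, multiplications, and adjoints of the four generators, so this is genuinely an algebraic, not a topological, statement. One should also double-check the index bookkeeping in $P_k = U^k P_0 U^{-k}$ against the convention $U E_k = E_{k+1}$ and the commutation relation \eqref{CommRel}, but this is routine. Combining the two inclusions gives $\mathcal{A} = \mathcal{B}$, which is the assertion of Proposition~\ref{useful_gens}.
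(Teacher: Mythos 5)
Your proof is correct and follows essentially the same route as the paper's: the paper's one-line argument rests on exactly the decomposition $a(\K)=a(-\infty)(I-P_{\ge0})+\sum_{|k|<k_0}a(k)\,U^kP_0U^{-k}+a(+\infty)P_{\ge0}$ (up to trivial reindexing), which is what you write out, together with the observation that $P_k=U^kP_0U^{-k}$. You are slightly more explicit about the easy inclusion and there is a harmless off-by-one at $k=-k_0$ in your decomposition, but nothing of substance differs.
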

\begin{proof}
This identification immediately follows from the the definition of $\mathcal{A}$ and the following decomposition of diagonal elements of $\mathcal{A}$:
\begin{equation*}
a(\K)=\sum_{k\leq -k_0}a(-\infty)(I-P_{\ge0})+\sum_{-k_0<k< k_0}a(k)U^nP_0U^{-n}+\sum_{k\ge k_0}a(\infty)P_{\ge0},
\end{equation*}
valid for $a(k)\in c_{00}^\pm(\Z)$. 
\end{proof}

Rotational symmetry on $A$ can be introduced in the following way. For each $\varphi\in[0,2\pi)$, let $\rho_\varphi : A\to A$ be an automorphism defined by $$\rho_\varphi(a(\K)) = a(\K)\textrm{ and } \rho_\varphi(U) = e^{-i\varphi}U,$$ 
for a diagonal operator $a(\K)$.
It is well defined on all of $A$ because it preserves the relation \eqref{CommRel}. Alternatively, the action of $\rho_\varphi$ can be written down using the label operator $\K$ as:\begin{equation*}
\rho_\varphi(a)=e^{i\varphi\K}ae^{-i\varphi\K}.
\end{equation*}
It follows that  $\rho_\varphi : \mathcal{A}\to \mathcal{A}$.  Notice also that we have the identifications:
$$\mathcal{A}_{diag}:=\left\{a(\K) \ : \{a(k)\}\in c_{00}^\pm(\Z) \right\}=\{a\in\mathcal{A} : \rho_\varphi(a) = a\},$$  
and similarly
$$A_{diag}= \left\{a(\K) \ : \{a(k)\}\in c(\Z) \right\}=\{a\in A : \rho_\varphi(a) = a\}.$$

The algebra $A$ also has a reflection symmetry that can be defined on the generators via 
\begin{equation}\label{little_theta}
\theta(U) = U \textrm{ and } \theta(a(\K)) = a(-\K).
\end{equation}
We will verify below that  $\theta$ extends to an anti-homomorphism $\theta : A\to A$, thus equipping the whole quantum cylinder with a natural reflection. Additionally, $\theta$ preserves the dense subalgebra $ \mathcal{A}$.

A natural analog of the classical space of square-integrable functions on the cylinder is the following GNS Hilbert space $\mathcal{H}$ for $A$ with respect to the weight $a\mapsto \textrm{tr}(a)$, which in turn is the analog of the Lebesque measure on the classical cylinder. It is the completion of the algebra $A$ with respect to the norm $\|a\|^2 =\textrm{ tr}(a^*a)$ for $a\in A$. Explicitly, we can view $\mathcal{H}$ in the following way:
\begin{equation}\label{Hdef_ref}
\mathcal{H} = \left\{f = \sum_{n\in\Z}U^nf_n(\K) : \sum_{n,k\in\Z}|f_n(k)|^2< \infty\right\}.
\end{equation}

The Hilbert space $\mathcal{H}$ carries natural representations of $A$. Let $\pi: A\to B(\mathcal{H})$ be the representation of $A$ in the bounded operators of $\mathcal{H}$ given by left multiplication:
$$ \pi(a)f = af,$$ 
where $a\in A$ and $f\in\mathcal{H}$.  We also need $\pi':A\to B(\mathcal{H})$ given by: 
$$\pi'(a)f = fa.$$  
Then the map $\pi$ is a continuous $*$ - homomorphism and $\pi'$ is a continuous $*$ - preserving anti-homomorphism. Both maps have trivial kernel.

Let $\mathcal{D}$ be the following subspace of $\mathcal{H}$:
\begin{equation}\label{Ddef_ref}
\mathcal{D} = \left\{f = \sum U^nf_n(\K) : f_n(k)\in c_{00}(\Z), \textrm{ finite sum}\right\}.
\end{equation}
Notice that $\mathcal{D}$ is dense in $\mathcal{H}$ and a straightforward calculation shows that the representations $\pi$ and $\pi'$ preserve $\mathcal{D}$. The subspace is a natural domain for unbounded operators considered in later sections.

The symmetries of $A$ can be implemented in $\mathcal{H}$ as follows. For the rotations, if $\varphi\in[0,2\pi)$, we define $U_\varphi :\mathcal{H}\to\mathcal{H}$ by the formula:\begin{equation*}
U_\varphi f = \sum_{n\in\Z} U^n e^{in\varphi} f_n(\K).
\end{equation*}
It follows from the definitions that we have $U_\varphi(\mathcal{D})=\mathcal{D}$ and
$$U_\varphi\pi(a)U_\varphi^{-1}=\pi(\rho_\varphi(a)).
$$

The implementation of reflection $\theta$ on $\mathcal{H}$ is given by the following operator $\Theta: \mathcal{H}\to \mathcal{H}$ given by the formula:
\begin{equation}\label{theta_decomp}
\Theta(f) = \Theta\left(\sum_{n\in\Z}U^nf_n(\K)\right) = \sum_{n\in\Z}f_n(-\K)U^n = \sum_{n\in\Z}U^nf_n(-\K-n).
\end{equation}
The relevant properties of $\Theta$ are summarized in the following propositions.
\begin{prop}
The operator $\Theta:\mathcal{H}\to\mathcal{H}$ satisfies $\Theta^2 = I$ and is a self-adjoint operator.
\end{prop}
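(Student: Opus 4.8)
The plan is to reduce everything to the coordinate description of $\Theta$ that is already implicit in \eqref{theta_decomp}. First I would justify that the two expressions in \eqref{theta_decomp} agree: iterating the commutation relation \eqref{CommRel} gives $a(\K)U^n = U^n a(\K+n)$ for every $n\in\Z$, hence $f_n(-\K)U^n = U^n f_n(-\K-n)$, and summing over $n$ yields the stated identity. Thus, writing $f = \sum_n U^n f_n(\K)$ with $\ell^2(\Z^2)$ coordinates $f_n(k)$ as in \eqref{Hdef_ref}, the operator $\Theta$ acts by
\begin{equation*}
(\Theta f)_n(k) = f_n(-k-n).
\end{equation*}

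Next I would check that $\Theta$ is a well-defined bounded operator on all of $\mathcal{H}$. For each fixed $n$ the map $k\mapsto -k-n$ is a bijection of $\Z$, so $\sum_{n,k}|(\Theta f)_n(k)|^2 = \sum_{n,k}|f_n(-k-n)|^2 = \sum_{n,k}|f_n(k)|^2$; therefore $\|\Theta f\| = \|f\|$, i.e.\ $\Theta$ is an isometry. (Concretely one defines it first on the dense subspace $\mathcal{D}$ of \eqref{Ddef_ref}, which it visibly maps into itself, and extends by continuity.) The involution property is then immediate from the coordinate formula: $(\Theta^2 f)_n(k) = (\Theta f)_n(-k-n) = f_n\bigl(-(-k-n)-n\bigr) = f_n(k)$, so $\Theta^2 = I$.

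Finally, for self-adjointness there are two equivalent routes. Since $\Theta$ is a bounded involution it is invertible with $\Theta^{-1} = \Theta$, and an invertible isometry is unitary, so $\Theta^* = \Theta^{-1} = \Theta$. Alternatively one can verify symmetry directly: for $f,g\in\mathcal{H}$,
\begin{equation*}
\langle \Theta f, g\rangle = \sum_{n,k}\overline{f_n(-k-n)}\,g_n(k) = \sum_{n,j}\overline{f_n(j)}\,g_n(-j-n) = \langle f, \Theta g\rangle
\end{equation*}
after the substitution $j = -k-n$, and boundedness upgrades symmetry to self-adjointness. There is no genuine obstacle here — the argument is bookkeeping with \eqref{CommRel} and reindexing of absolutely convergent sums; the only point deserving a line of care is confirming that $\Theta$ is an honest bounded operator on $\mathcal{H}$ before speaking of its adjoint, which the coordinate description together with the bijectivity of $k\mapsto -k-n$ settles at once.
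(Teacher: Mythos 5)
Your proposal is correct and follows essentially the same route as the paper: the involution property comes from applying the coordinate formula $(\Theta f)_n(k)=f_n(-k-n)$ twice, and self-adjointness from the reindexing $k\mapsto -k-n$ in the inner product. The extra care you take in verifying that $\Theta$ is a bounded (indeed isometric) operator before discussing its adjoint is a reasonable addition but does not change the substance of the argument.
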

\begin{proof}
The statements follow from direct calculations. If $f\in\mathcal{H}$, we have:
\begin{equation*}
\Theta^2(f) = \Theta\left(\sum_{n\in\Z}U^nf_n(-\K-n)\right) = \sum_{n\in\Z}U^n(-(-\K-n)-n) = \sum_{n\in\Z}U^nf_n(\K).
\end{equation*}
Thus $\Theta^2 = I$.  Similarly, if $f,g\in\mathcal{H}$ we have:
\begin{equation*}
\langle \Theta(f), g\rangle = \textrm{tr}\left((\theta f)^*g\right) = \textrm{tr}\left(\sum_{n\in\Z}\overline{f}_n(-\K-n)U^{-n}\sum_{m\in\Z}U^mg_m(\K)\right) = \sum_{n,k\in\Z}\overline{f}_n(-k-n)g_n(k),
\end{equation*}
where we used the fact that the trace is nonzero if and only if $n=m$.  Next, changing variables by sending $k\mapsto -k-n$ and resumming, we obtain:
\begin{equation*}
\langle \Theta(f), g\rangle  = \sum_{n,k\in\Z}\overline{f}_n(k)g_n(-k-n) = \textrm{tr}\left(\sum_{n\in\Z}\overline{f}_n(\K)U^{-n}\sum_{m\in\Z}U^mg_m(-\K-m)\right) = \langle f, \Theta(g)\rangle,
\end{equation*} 
which completes the proof.
\end{proof}

The interplay between representations $\pi$, $\pi'$ and $\Theta$ lets us extend the map $\theta$ defined on generators in \eqref{little_theta} to an anti-homomorphism on all of $A$.
\begin{prop}\label{morph_anti_on_gens}
We have $\Theta\pi(U)\Theta = \pi'(U)$ and $\Theta\pi(a(\K))\Theta = \pi'(a(-\K))$.   Consequently, there is an anti-homomorphism $\theta: A\to A$ such that 
$$\Theta\pi(a)\Theta = \pi'(\theta(a))$$ 
for all $a\in A$.
\end{prop}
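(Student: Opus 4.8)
The plan is to establish the two operator identities by a direct computation in the model \eqref{Hdef_ref}, and then to assemble them into the anti-homomorphism $\theta$ using the interplay of the representations $\pi$ and $\pi'$ together with $\Theta$.

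For the two identities I would run an arbitrary $f=\sum_{n\in\Z}U^nf_n(\K)\in\mathcal{H}$ through the three maps, in the same style as the proof of the preceding proposition. Using \eqref{theta_decomp} in the form $\Theta f=\sum_n U^nf_n(-\K-n)$, the application of $\pi(U)=U\cdot$ is simply the index shift $n\mapsto n+1$ on the symbols, and then a second application of $\Theta$, after reorganizing the shifted diagonal arguments with \eqref{CommRel}, collapses the outcome to $\sum_n U^nf_{n-1}(\K+1)$; since $\pi'(U)f=fU=\sum_n U^{n+1}f_n(\K+1)$ is literally the same series, we get $\Theta\pi(U)\Theta=\pi'(U)$. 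For a diagonal symbol, $\pi(a(\K))$ multiplies the $n$-th symbol of $\Theta f$ on the left by $a(\K+n)$ by \eqref{CommRel}, and after the second application of $\Theta$ the argument of $a$ turns into $-\K$, leaving $\sum_n U^na(-\K)f_n(\K)$, which equals $fa(-\K)=\pi'(a(-\K))f$ because diagonal operators commute; hence $\Theta\pi(a(\K))\Theta=\pi'(a(-\K))$. The only delicate point here is the bookkeeping of the shifted indices; there is no genuine obstacle.

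For the final assertion, put $\psi(a):=\Theta\pi(a)\Theta$. Because $\Theta^2=I$ and $\Theta^*=\Theta$ (shown above), conjugation by $\Theta$ is a $*$-automorphism of $B(\mathcal{H})$, so $\psi:A\to B(\mathcal{H})$ is a continuous $*$-homomorphism. On the other hand $\pi'$ is injective and continuous, so, being an injective $*$-homomorphism once its domain is equipped with the opposite product, it is isometric; in particular $\pi'(A)$ is a C$^*$-subalgebra of $B(\mathcal{H})$ and $(\pi')^{-1}\colon\pi'(A)\to A$ is a well-defined anti-isomorphism. The crux is the inclusion $\psi(A)\subseteq\pi'(A)$. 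Since $A=C^*(A_{diag},U)$ and $\psi$ is a continuous $*$-homomorphism, $\psi(A)$ is the C$^*$-algebra generated by $\psi(U)=\pi'(U)$ and $\psi(A_{diag})=\pi'(\{a(-\K):a(\K)\in A_{diag}\})$; the flip $k\mapsto -k$ maps $c(\Z)$ onto itself, so $\{a(-\K):a(\K)\in A_{diag}\}=A_{diag}$, and therefore $\psi(A)=C^*(\pi'(A_{diag}),\pi'(U))=\pi'(C^*(A_{diag},U))=\pi'(A)$, where the last equality uses that $\pi'$ is a continuous anti-homomorphism (the image of a generating self-adjoint subalgebra is again a generating self-adjoint subalgebra) together with the closedness of $\pi'(A)$. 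One may equally well argue with the generators of Proposition \ref{useful_gens}: $\psi$ sends $U$, $U^{-1}$, $P_{\ge0}$, $P_0$ to $\pi'(U)$, $\pi'(U^{-1})$, $\pi'(I-P_{\ge0}+P_0)$, $\pi'(P_0)$, all of which lie in $\pi'(\mathcal{A})$, so $\psi(\mathcal{A})\subseteq\pi'(\mathcal{A})$ and hence $\psi(A)\subseteq\pi'(A)$ by density and closedness.

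With $\psi(A)\subseteq\pi'(A)$ in hand, define $\theta:=(\pi')^{-1}\circ\psi\colon A\to A$. It is the composition of a $*$-homomorphism with a $*$-anti-homomorphism, hence a $*$-anti-homomorphism, and by construction $\pi'(\theta(a))=\psi(a)=\Theta\pi(a)\Theta$ for all $a\in A$. Finally $\theta(U)=(\pi')^{-1}(\pi'(U))=U$ and $\theta(a(\K))=(\pi')^{-1}(\pi'(a(-\K)))=a(-\K)$, so $\theta$ restricts on the generators to the map \eqref{little_theta}, as desired. I expect the main obstacle of the whole argument to be exactly the inclusion $\psi(A)\subseteq\pi'(A)$: one must handle carefully the reversal of products under $\pi'$ when comparing the two generating subalgebras, and invoke the standard but necessary facts that injective C$^*$-(anti)homomorphisms are isometric and that $A_{diag}$ is stable under $\K\mapsto-\K$.
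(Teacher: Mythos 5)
Your proposal is correct and follows essentially the same route as the paper: a direct computation of $\Theta\pi(U)\Theta$ and $\Theta\pi(a(\K))\Theta$ on a general $f=\sum_n U^nf_n(\K)$ using \eqref{theta_decomp} and \eqref{CommRel}, followed by the observation that conjugation by $\Theta$ is a continuous $*$-homomorphism landing in the closed range of the faithful anti-homomorphism $\pi'$, so that $\theta=(\pi')^{-1}\circ(\Theta\pi(\cdot)\Theta)$ is well defined. You merely spell out in more detail the generating-set and isometry facts that the paper compresses into the sentence ``since the range of $\pi'$ is closed, continuity establishes the existence of $\theta$.''
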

\begin{proof}
The first part of the proposition follows from calculations. For $f\in\mathcal{H}$ we have:
\begin{equation*}
\begin{aligned}
\Theta\pi(U)\Theta &= \Theta\left(\sum_{n\in\Z}U^{n+1}f_n(-\K-n)\right) 
= \sum_{n\in\Z}U^{n+1}f_{n}(-(-\K-n+1)-n) \\
&= \sum_{n\in\Z}U^{n+1}f_n(\K+1) 
= \sum_{n\in\Z}U^nf_n(\K)U = \pi'(U)f.
\end{aligned}
\end{equation*}
Similarly,  we have 
\begin{equation*}
\begin{aligned}
\Theta\pi(a(\K))\Theta f &= \Theta\left(\sum_{n\in\Z}U^na(\K+n)f_n(-\K-n)\right) = \sum_{n\in\Z}U^na(-\K-n+n)f_n(\K) \\
&=\sum_{n\in\Z}U^nf_n(\K)a(-\K) = \pi'(a(-\K))f.
\end{aligned}
\end{equation*}
Since the range of $\pi'$ is closed, continuity establishes the existence of  $\theta$ such that $\Theta\pi(a)\Theta = \pi'(\theta(a))$. Uniqueness follows because $\pi'$ is faithful. This completes the proof.
\end{proof}

The last task in this section is to identify appropriate subspaces of $\mathcal{H}$ to be the space of noncommutative $L^2$ functions supported on the right and left half-cylinder.  Identifying $\mathcal{H}\cong \ell^2(\Z^2)$, we see that the operator $\Theta$ comes from the following natural space map
\begin{equation*}
\Z^2\to\Z^2: (n,k)\mapsto(n,-k-n).
\end{equation*}
A simple calculation verifies that the fixed points of the above transformation are those $(n,k)$ such that $n+2k = 0$.  Thus we obtain two natural subspaces of $\mathcal{H}:$
\begin{equation}\label{pos_neg_subspaces}
\begin{aligned}
&\mathcal{H}^+ = \{f\in\mathcal{H} : f_n(k) = 0,\textrm{ for }n+2k<0\} \\
&\mathcal{H}^- = \{f\in\mathcal{H} : f_n(k) = 0,\textrm{ for }n+2k>0\}. 
\end{aligned}
\end{equation}
We have the following key property.
\begin{prop}\label{theta_H_plus_H_minus}
With the above notation, we have $\Theta:\mathcal{H}^+\to\mathcal{H}^-$.
\end{prop}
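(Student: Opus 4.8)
The plan is to read off the action of $\Theta$ on the coefficient sequences directly from the last expression in \eqref{theta_decomp}, and then to check that the index inequality defining $\mathcal{H}^+$ is carried exactly onto the one defining $\mathcal{H}^-$. Concretely, writing $f = \sum_{n\in\Z} U^n f_n(\K) \in \mathcal{H}$, formula \eqref{theta_decomp} gives $\Theta f = \sum_{n\in\Z} U^n f_n(-\K - n)$, so the coefficient sequences of $\Theta f$ are $(\Theta f)_n(k) = f_n(-k-n)$. This is the only input we need; everything else is bookkeeping of inequalities.

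Now suppose $f \in \mathcal{H}^+$, i.e. $f_n(k) = 0$ whenever $n + 2k < 0$. To prove $\Theta f \in \mathcal{H}^-$ we must verify that $(\Theta f)_n(k) = 0$ whenever $n + 2k > 0$. Fix such a pair $(n,k)$ and set $j = -k - n$. Then $n + 2j = n + 2(-k-n) = -(n+2k) < 0$, so by the hypothesis on $f$ we get $f_n(j) = 0$, and hence $(\Theta f)_n(k) = f_n(-k-n) = f_n(j) = 0$. Since $(n,k)$ with $n+2k>0$ was arbitrary, $\Theta f$ satisfies the defining condition of $\mathcal{H}^-$, which is the claim.

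Conversely, the same computation with the inequalities reversed shows $\Theta:\mathcal{H}^-\to\mathcal{H}^+$, and combining this with $\Theta^2 = I$ from the earlier proposition yields that $\Theta$ restricts to an involutive bijection (indeed a unitary involution) interchanging $\mathcal{H}^+$ and $\mathcal{H}^-$; this stronger statement is worth recording since it is what the reflection positivity argument in Section 4 will actually use. There is essentially no genuine obstacle here: the only point requiring care is to use the form of $\Theta f$ in which $U^n$ stands to the left of the diagonal coefficient (the last expression in \eqref{theta_decomp}), because only then can $(\Theta f)_n(k)$ be read off without a further application of the commutation relation \eqref{CommRel}. As a sanity check one may also note that $\Theta$ fixes exactly the coordinates with $n + 2k = 0$, matching the fixed-point set of $(n,k)\mapsto(n,-k-n)$ computed just before \eqref{pos_neg_subspaces}, so that $\mathcal{H}^+$ and $\mathcal{H}^-$ overlap only in that ``boundary'' and are genuinely swapped by $\Theta$.
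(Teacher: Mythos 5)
Your argument is correct and is essentially identical to the paper's proof: both read off $(\Theta f)_n(k) = f_n(-k-n)$ from \eqref{theta_decomp} and observe that $n+2(-k-n) = -(n+2k)$ converts the defining inequality of $\mathcal{H}^+$ into that of $\mathcal{H}^-$. The additional remarks about the reverse inclusion and the involutive bijection are fine but not part of the paper's proof.
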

\begin{proof}
For $f\in\mathcal{H}^+$ if $g=\Theta f$, then we have $g_n(k) = f_n(-k-n)$ and moreover $g_n(k)=0$, provided $n+2(-k-n)<0$.  Thus $g_n(k)=0$ if $n+2k>0$, completing the proof.
\end{proof}

\section{Classification of Derivations in the Quantum cylinder}

The purpose of this section is to give a description of unbounded derivations in $A$ defined on $\mathcal{A}$. One can think of derivations as noncommutative analogs of vector fields, and they will be used in the next section as building blocks in constructing quantum Laplace-like operators.

Analogous classification of derivations  in the quantum disk, and in several other Toeplitz-type algebras, was discussed in \cite{KMRSW1} and \cite{KMRSW2}, and similar decompositions were previously introduced in \cite{BEJ}, \cite{H}, and \cite{J}, see also references therein.   

The key to understanding the structure of derivations in $A$ is the following simple observation.

\begin{prop}
Let $d:\mathcal{A}\to A$ be any derivation, then $d: \mathcal{A}\cap\mathcal{K}\to \mathcal{K}$.  Consequently $d$ defines a  derivation on equivalence classes: 
\begin{equation*}
[d]: [\mathcal{A}]\to A/\mathcal{K} \cong C(S^1)\oplus C(S^1).
\end{equation*}
\end{prop}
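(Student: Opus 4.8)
The plan is to show that a derivation $d$ on $\mathcal{A}$ automatically maps the subalgebra $\mathcal{A}\cap\mathcal{K}$ into the ideal $\mathcal{K}$, and then to descend to the quotient. The first observation is that $\mathcal{A}\cap\mathcal{K}$ admits a concrete description via Proposition \ref{useful_gens}: the compact elements of $\mathcal{A}$ are exactly the finite linear combinations of rank-one operators $U^nP_0U^{-m}$, equivalently the elements $a=\sum_n U^na_n(\K)$ with all $a_n$ finitely supported and with vanishing constants $a_n(\pm\infty)$. So $\mathcal{A}\cap\mathcal{K}$ is the linear span of the matrix units $e_{jk}:=U^jP_0U^{-k}$, which satisfy $e_{jk}e_{kl}=e_{jl}$ and $e_{jk}e_{lm}=0$ for $k\neq l$.

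First I would compute $d(e_{jj})$. Since $e_{jj}=P_jU^jP_0U^{-j}$ is a self-adjoint idempotent, $e_{jj}^2=e_{jj}$, so $d(e_{jj})=d(e_{jj})e_{jj}+e_{jj}d(e_{jj})$, hence $e_{jj}d(e_{jj})e_{jj}=2e_{jj}d(e_{jj})e_{jj}$, giving $e_{jj}d(e_{jj})e_{jj}=0$; thus $d(e_{jj})=d(e_{jj})e_{jj}+e_{jj}d(e_{jj})$ lies in $e_{jj}A+Ae_{jj}$, both of which consist of compact operators (left/right multiplication by a finite-rank projection lands in $\mathcal{K}$). Therefore $d(e_{jj})\in\mathcal{K}$. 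Next, for a general matrix unit write $e_{jk}=e_{jj}e_{jk}e_{kk}$ and apply the Leibniz rule three times; every resulting term has the form (compact)$\cdot x\cdot$(something) or is of the form $d(e_{jj})e_{jk}e_{kk}$, $e_{jj}d(e_{jk})e_{kk}$, $e_{jj}e_{jk}d(e_{kk})$. The outer factors $e_{jj}$ or $e_{kk}$ are finite-rank, so all terms are compact; hence $d(e_{jk})\in\mathcal{K}$. Since $\mathcal{A}\cap\mathcal{K}$ is spanned by such $e_{jk}$ and $d$ is linear, $d(\mathcal{A}\cap\mathcal{K})\subseteq\mathcal{K}$.

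With this in hand, the quotient statement is formal. Because $\mathcal{K}$ is a two-sided ideal in $A$, the quotient $A/\mathcal{K}$ is a C$^*$-algebra, identified in Section 2 with $C(S^1)\oplus C(S^1)$. Let $q:A\to A/\mathcal{K}$ be the quotient map; then $[\mathcal{A}]:=q(\mathcal{A})$ is a (dense) subalgebra of $A/\mathcal{K}$, and we define $[d]([a]):=q(d(a))$. This is well defined precisely because $d(\mathcal{A}\cap\mathcal{K})\subseteq\mathcal{K}=\ker q$: if $[a]=[b]$ then $a-b\in\mathcal{A}\cap\mathcal{K}$, so $d(a)-d(b)=d(a-b)\in\mathcal{K}$ and $q(d(a))=q(d(b))$. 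The Leibniz rule for $[d]$ follows by applying $q$ to the Leibniz rule for $d$ and using that $q$ is an algebra homomorphism: $[d]([a][b])=q(d(ab))=q(d(a)b+ad(b))=q(d(a))q(b)+q(a)q(d(b))=[d]([a])[b]+[a][d]([b])$.

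The only step requiring genuine care is the claim $d(\mathcal{A}\cap\mathcal{K})\subseteq\mathcal{K}$, i.e. the argument in the second paragraph; everything after it is bookkeeping. The mild subtlety is that a derivation need not be continuous, so one cannot pass to norm limits — this is why I reduce to the finitely many matrix units spanning $\mathcal{A}\cap\mathcal{K}$ and use only the purely algebraic idempotent identity $e_{jj}^2=e_{jj}$ together with the fact that finite-rank projections absorb arbitrary elements of $A$ into $\mathcal{K}$ on either side. One should also double-check that $e_{jj}$, $e_{jk}$ genuinely lie in $\mathcal{A}$ (they do: $P_0=P_{\ge 0}-U P_{\ge 0}U^{-1}\in\mathcal{A}$ by Proposition \ref{useful_gens}, and $U,U^{-1}\in\mathcal{A}$), so that $d(e_{jk})$ is defined.
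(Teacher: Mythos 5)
Your proof is correct and follows essentially the same route as the paper: both exploit the idempotent identity for the rank-one projection (you for $e_{jj}=U^jP_0U^{-j}$, the paper for $P_0$ itself) to conclude $d(P)=Pd(P)+d(P)P\in\mathcal{K}$, then apply the Leibniz rule to the spanning elements $U^jP_0U^{-k}$ of $\mathcal{A}\cap\mathcal{K}$ and descend to the quotient. Your version merely spells out more of the bookkeeping (the well-definedness of $[d]$ and the membership of the matrix units in $\mathcal{A}$), which the paper leaves implicit.
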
\begin{proof}
Notice that the projection $P_0$ is in $\mathcal{A}\cap\mathcal{K}$. Applying the Leibniz rule to $P_0^2=P_0$ we get:
$$d(P_0)=P_0d(P_0)+d(P_0)P_0.
$$
It follows that $d(P_0)$ is in $\mathcal{K}$. But any element of $\mathcal{A}\cap\mathcal{K}$ is a finite sum of operators of the form $U^nP_0U^m$, and the Leibniz rule again implies that the action of $d$ on such elements results in a compact operator.
\end{proof}

We want to understand the properties of the correspondence $d\mapsto [d]$. To describe its kernel we need the following concepts.

Any derivation $d:\mathcal{A}\to A$ that satisfies the relation
\begin{equation*}
d(\rho_\varphi(a)) = e^{-in\varphi}\rho_\varphi(d(a))
\end{equation*}
will be referred to as a $n$-{\it covariant} derivation.
We say that a function $\beta:\Z \to\C$ has {\it convergent increments}, if the sequence of differences $\{\beta(k)-\beta(k-1)\}$ is convergent i.e. 
$$\{\beta(k)-\beta(k-1)\}\in c(\Z).$$  
The set of all such functions will be denoted by $c_{inc}(\Z)$. 
The following proposition classifies all $n$-covariant derivations $d:\mathcal{A}\to A$.  

\begin{prop}\label{n_covar_der_rep}
If $d$ is a $n$-covariant derivation $d:\mathcal{A}\to A$, then there exists a function $\beta\in c_{inc}(\Z)$, which is unique in $n\ne 0$ and unique modulo an additive constant when $n=0$, such that 
$$d(a) = [U^n\beta(\K), a]$$ 
for $a\in\mathcal{A}$.
\end{prop}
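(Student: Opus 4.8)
The plan is to exploit the covariance condition to reduce the problem to the diagonal subalgebra and to the action of $d$ on the single generator $U$, and then to turn the Leibniz rule into a recursion on the coefficient sequence. First I would decompose the target: write $d(a)=\sum_m U^m d_m(a)(\K)$ where $d_m(a)(\K)$ is the diagonal part extracted by the conditional expectation $a\mapsto \int_0^{2\pi} e^{im\varphi}\rho_\varphi(a)\,d\varphi/2\pi$. Applying the covariance identity $d(\rho_\varphi(a))=e^{-in\varphi}\rho_\varphi(d(a))$ to $a=U$ and to $a=a(\K)$, and matching the $\rho_\varphi$-weights on both sides, forces $d(U)$ to have only the $U^{n+1}$-component and $d(a(\K))$ to have only the $U^n$-component; so there are diagonal sequences with
$$d(U)=U^{n+1}\gamma(\K),\qquad d(a(\K))=U^n \delta_a(\K)$$
for suitable $\gamma$ and $\delta_a$ depending linearly on $a$.

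Next I would pin down $\delta_a$. Using Proposition \ref{useful_gens}, $\mathcal{A}_{diag}$ is generated as described by $P_{\ge 0}$, $P_0$, and conjugates $U^nP_0U^{-n}$, so it suffices to know $d$ on $P_0$ and $P_{\ge0}$; but more efficiently, the Leibniz rule applied to the commutation relation \eqref{CommRel}, $a(\K)U=Ua(\K+1)$, gives a first-order difference equation relating $\delta_a$, $\gamma$, and $\delta_{a(\cdot+1)}$. Solving it I expect to find $d(a(\K))=U^n\big(\beta(\K)a(\K-n)-a(\K)\beta(\K)\big)$ for a single sequence $\beta$ with $\beta(k)-\beta(k-1)$ essentially equal to $\gamma(k)$ (up to reindexing), i.e. $d(a(\K))=[U^n\beta(\K),a(\K)]$. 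One then checks the ansatz $d(a)=[U^n\beta(\K),a]$ is a derivation on all of $\mathcal{A}$ (trivially, as an inner derivation formula) and agrees with the given $d$ on the generators $U$ and $A_{diag}$, hence on all of $\mathcal{A}$.

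The remaining points are the regularity of $\beta$ and the uniqueness statement. For regularity: since $d$ maps into $A$, in particular $d(U)=U^{n+1}\gamma(\K)\in A$ forces $\gamma(\K)\in A_{diag}$, i.e. $\{\gamma(k)\}=\{\beta(k)-\beta(k-1)\}\in c(\Z)$, which is exactly $\beta\in c_{inc}(\Z)$; one should double-check that no stronger decay is needed, i.e. that $[U^n\beta(\K),a]\in A$ for every $a\in\mathcal{A}$ whenever $\beta\in c_{inc}(\Z)$ — this is where the eventual-constancy of $a$ in $c_{00}^\pm(\Z)$ makes the commutator telescope to something in $A$. For uniqueness: if $[U^n\beta(\K),a]=[U^n\beta'(\K),a]$ for all $a\in\mathcal{A}$, take $a=a(\K)$ to get $U^n(\beta-\beta')(\K)a(\K-n)=U^n a(\K)(\beta-\beta')(\K)$, i.e. $(\beta(k-n)-\beta'(k-n))a(k-n)=(\beta(k)-\beta'(k))a(k)$ for all eventually-constant $a$; choosing $a$ supported at a single point shows $\beta-\beta'$ is constant when $n=0$, and testing against shifts shows $\beta-\beta'\equiv 0$ when $n\ne0$. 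I expect the main obstacle to be the bookkeeping in the difference equation coming from \eqref{CommRel} — getting the shifts ($\K$ versus $\K+1$ versus $\K-n$) exactly right so that the solution genuinely has the commutator form and the correct index conventions — rather than any conceptual difficulty.
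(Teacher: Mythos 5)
Your reduction via covariance (forcing $d(a(\K))=U^n\delta_a(\K)$ and $d(U)=U^{n+1}\gamma(\K)$), your regularity argument ($d(U)\in A$ and $\rho_\varphi$-homogeneity force $\gamma(\K)\in A_{diag}$, i.e.\ $\gamma\in c(\Z)$, i.e.\ $\beta\in c_{inc}(\Z)$), and your uniqueness argument are all sound and consistent with what the paper does. The gap is in the central step. The first-order difference equation you extract from $a(\K)U=Ua(\K+1)$, which relates $\delta_a(k+1)$ to $\delta_{a(\cdot+1)}(k)$ and $\gamma(k)\left(a(k+1)-a(k+n+1)\right)$, does \emph{not} determine $\delta_a$: it only propagates values along shift orbits, and its homogeneous solutions include every shift-equivariant linear map $a\mapsto\epsilon_a$ with $\epsilon_a(k+1)=\epsilon_{a(\cdot+1)}(k)$ (for instance $\epsilon_a(k)=a(k+5)-a(k+17)$), which is not of commutator form. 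Concretely, for $n\ne0$ the relation $P_0^2=P_0$ only tells you that $\delta_{P_0}$ is supported on $\{0,-n\}$ with two \emph{independent} values; your recursion then transports that pair to every $\delta_{P_m}$ consistently, but never forces the constraint $\delta_{P_0}(-n)=-\beta(-n)$ (with $\beta$ the antiderivative of $\gamma$ normalized by $\beta(0)=\delta_{P_0}(0)$) that the commutator form requires, and likewise leaves the $n$ values of $\delta_{P_{\ge0}}$ on $\{-n,\dots,-1\}$ unconstrained.

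The missing ingredient is the Leibniz rule applied to products of two \emph{diagonal} elements, and that is exactly the paper's key step: setting $\tilde d(a)=U^{-n}d(a)$, one has the twisted Leibniz rule $\tilde d(a(\K)b(\K))=\tilde d(a(\K))b(\K)+a(\K+n)\tilde d(b(\K))$ on $\mathcal{A}_{diag}$, and commutativity, i.e.\ $\tilde d(ab)=\tilde d(ba)$, yields the pointwise identity $\tilde d(a)(k)\left(b(k)-b(k+n)\right)=\tilde d(b)(k)\left(a(k)-a(k+n)\right)$. Choosing for each $k$ some $b$ with $b(k)\ne b(k+n)$ defines $\beta(k)$ unambiguously and gives $\tilde d(a)(k)=\beta(k)\left(a(k)-a(k+n)\right)$ for all $a$ simultaneously. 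If you add this one identity (or, equivalently, apply the Leibniz rule to relations such as $P_0P_{-n}=0$ and $P_{\ge0}\cdot U^mP_0U^{-m}$), your argument closes; without it the proof is incomplete.
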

\begin{proof}
The proof is a slight extension of Proposition 3.2 in \cite{KMR}, see also Theorem 3.4 of \cite{KMRSW2}.
Defining a linear map $\tilde{d}:\mathcal{A}\to A$ via the formula:
$$\tilde{d}(a) := U^{-n}d(a),$$
we see from the covariance property of $d$ that $\tilde{d}:\mathcal{A}_{diag}\to A_{diag}$. Additionally, $\tilde{d}$ satisfies a twisted Leibniz rule of the form:
\begin{equation*}
\tilde{d}(a(\K)b(\K)) = \tilde{d}(a(\K))b(\K) + a(\K+n)\tilde{d}(b(\K)).
\end{equation*}
Because $\mathcal{A}_{diag}$ is commutative we have: $\tilde d(a(\mathbb K)b(\mathbb K))= \tilde d(b(\mathbb K)a(\mathbb K))$, so the following relation follows: 
\begin{equation*}
\tilde d(a(\mathbb K))[b(\mathbb K )-b(\mathbb K +n)]= \tilde d(b(\mathbb K))[a(\mathbb K )-a(\mathbb K +n)]. 
\end{equation*}
We can now easily see, as in \cite{KMRSW2}, that there is a diagonal operator $\beta(\K)$ such that: 
\begin{equation*}
\tilde{d}(a(\K)) = \beta(\K)(a(\K) - a(\K+n)),
\end{equation*}
and the result follows.
\end{proof}

Recall that $d$ is called {\it approximately inner} if there are $a_n\in A$  such that 
$$d(a) = \lim_{n\to\infty}[a_n,a]$$ 
for $a\in\mathcal{A}$. \begin{prop}\label{approx_inner_coef}
If $d$ is a $n$-covariant derivation $d:\mathcal{A}\to A$, then $d$ is approximately inner if and only if $\{\beta(k+1)-\beta(k)\}\in c_0(\Z)$.
\end{prop}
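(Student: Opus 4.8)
By Proposition~\ref{n_covar_der_rep} we may write $d(a)=[U^n\beta(\K),a]$ for a fixed $\beta\in c_{inc}(\Z)$, and we put $g(k):=\beta(k+1)-\beta(k)$; since $\beta$ has convergent increments $g$ is bounded, and the statement to be proved is exactly that $g\in c_0(\Z)$. The plan is to prove both implications by testing $d$ on the single element $U\in\mathcal{A}$ and by tracking ``Fourier coefficients'' of elements of $A$ relative to the rotation action $\rho_\varphi$. Recall that for $b\in A$ and $p\in\Z$ the $p$-th coefficient $b_p(\K)$, defined as the compression of $U^{-p}b$ to the diagonal of $B(\ell^2(\Z))$, is a diagonal operator lying in $A_{diag}$: this is immediate for $b\in\mathcal{A}$, and in general one approximates $b$ in norm by elements of $\mathcal{A}$ and uses that the diagonal compression is contractive while $A_{diag}$ is norm closed; in particular $\|b_p(\K)\|\le\|b\|$.

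First I would establish the ``only if'' direction. Using \eqref{CommRel} one computes $d(U)=[U^n\beta(\K),U]=U^{n+1}g(\K)$, so the $(n+1)$-st coefficient of $d(U)$ is $g(\K)$. If $d$ is approximately inner, say $d(a)=\lim_m[a_m,a]$ with $a_m\in A$, then the analogous bookkeeping of coefficients shows that the $(n+1)$-st coefficient of $[a_m,U]$ equals $(a_m)_n(\K+1)-(a_m)_n(\K)$. Since the coefficient maps are contractive, $[a_m,U]\to d(U)$ in norm forces $(a_m)_n(k+1)-(a_m)_n(k)\to g(k)$ uniformly in $k$. But $(a_m)_n(\K)\in A_{diag}$, so for each $m$ the sequence $k\mapsto(a_m)_n(k+1)-(a_m)_n(k)$ lies in $c_0(\Z)$; being a uniform limit of such sequences, $g\in c_0(\Z)$ since $c_0(\Z)$ is closed in $\ell^\infty(\Z)$.

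For the ``if'' direction, assume $g\in c_0(\Z)$ and I would exhibit an explicit approximating sequence. Let $\beta_m$ be the clamped truncation $\beta_m(k)=\beta\big(\max(-m,\min(m,k))\big)$, which is eventually constant, so $a_m:=U^n\beta_m(\K)\in\mathcal{A}\subseteq A$. For a monomial $a=U^jc(\K)$ with $c\in c_{00}^\pm(\Z)$, relation \eqref{CommRel} gives
\begin{equation*}
[a_m,a]-d(a)=U^{n+j}D_m(\K),\qquad D_m(k)=\delta_m(k+j)\,c(k)-c(k+n)\,\delta_m(k+n),
\end{equation*}
where $\delta_m:=\beta_m-\beta$, so $\|[a_m,a]-d(a)\|=\sup_k|D_m(k)|$. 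Since $\delta_m$ is supported in $\{|k|>m\}$ and $c$ is eventually constant, for $m$ large $D_m(k)$ vanishes unless $|k|$ is large, and wherever it is nonzero we have $c(k)=c(k+n)$ equal to the one-sided limit $c(\pm\infty)$; there the two terms partially cancel, and $D_m(k)$ reduces to $c(\pm\infty)$ times either $\beta(k+n)-\beta(k+j)$, or $\beta(\pm m)-\beta(k+j)$ with $|k+j|$ within distance $|j-n|$ of $m$. In every case this is a sum of at most $|j-n|$ values of $g$ at indices of absolute value $\ge m-O(1)$, hence $\sup_k|D_m(k)|\le C\,\sup_{|\ell|\ge m-O(1)}|g(\ell)|\to0$. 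A general $a\in\mathcal{A}$ is a finite sum of such monomials and the same $a_m$ works for all summands by linearity, so $d(a)=\lim_m[a_m,a]$ for every $a\in\mathcal{A}$; that is, $d$ is approximately inner.

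I expect the main obstacle to be the estimate in the ``if'' direction. The difficulty is that $\beta$ itself may be unbounded --- only its increments are controlled --- so $U^n\beta(\K)\notin A$, and the naive sharp cutoff $\chi_{\{|k|\le m\}}\beta$ fails: near $|k|=m$ it leaves an uncancelled term of the form $\beta(k)c(\pm\infty)$ of size $\approx|\beta(m)|$, which need not be small. Replacing the sharp cutoff by the clamped one is precisely what turns this remainder into a difference $\beta(\pm m)-\beta(k)$ over a bounded range of indices, i.e.\ a sum of boundedly many increments near infinity, which is the only place the hypothesis $g\in c_0(\Z)$ enters.
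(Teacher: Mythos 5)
Your proof is correct and follows essentially the same route as the paper's (which is only a sketch there, with details deferred to \cite{KMRSW2}): for sufficiency you use the same eventually constant truncation of $\beta$ whose increments agree with those of $\beta$ on $|k|\le m$ and vanish outside, and for necessity you extract the $n$-covariant Fourier component of the approximating elements, realized concretely as a contractive coefficient map tested on the generator $U$. The only blemish is an index slip in your formula for $D_m$ --- the second term should be $c(k+n)\,\delta_m(k)$ rather than $c(k+n)\,\delta_m(k+n)$ --- which does not affect the argument, since either way $D_m(k)$ reduces, wherever it is nonzero for large $m$, to $c(\pm\infty)$ times a boundedly long sum of increments of $\beta$ at indices near or beyond $\pm m$.
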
\begin{proof}
Full details of an analogous result for a class of more complicated examples is given in Theorem 3.10 of \cite{KMRSW2}, therefore we only give a brief outline here. 

If $\{\beta(k+1)-\beta(k)\}\in c_0(\Z)$ then the corresponding derivation $d(a) = [U^n\beta(\K), a]$ can be approximated by inner derivations $d_M(a) = [U^n\beta_M(\K), a]$, where $\beta_M(k)$ are eventually constant and such that $\beta_M(k+1)-\beta_M(k)=\beta(k+1)-\beta(k)$ for $|k|\leq M$ while $\beta_M(k+1)-\beta_M(k)=0$ if $|k|>M$.

On the other hand if a $n$-covariant derivation $d(a) = [U^n\beta(\K), a]$ is approximately inner then we can arrange that it can be approximated by inner $n$-covariant derivations of the form  $d_M(a) = [U^n\beta_M(\K), a]$ with $\beta_M(k)\in c(\Z)$. Since $\{\beta_M(k+1)-\beta_M(k)\}\in c_0(\Z)$ we also get $\{\beta(k+1)-\beta(k)\}\in c_0(\Z)$.
\end{proof}

With this preparation we are ready to describe the kernel of the quotient map $d\mapsto [d]$.
\begin{theo}
Let $d:\mathcal{A} \to A$ be any derivation.  Then $[d] = 0$  if and only if $d$ is approximately inner.
\end{theo}\begin{proof}
Let $d$ be approximately inner, then there exists $a_n\in A$ such that 
\begin{equation*}
d(a) = \lim_{n\to\infty}[a_n,a]
\end{equation*}
for all $a\in\mathcal{A}$.  It was shown in \cite{KMRS} that $\mathcal{K}$ is the commutator ideal.  Thus $[a_n,a]\in\mathcal{K}$ for all $n$ and $a\in\mathcal{A}$.  Since the norm limit of compact operators is compact, we have $d(a)\in\mathcal{K}$ for all $a\in\mathcal{A}$ and hence $[d]=0$.  

On the other hand, suppose $[d]=0$.  Define $d_n:\mathcal{A}\to A$ via
\begin{equation*}
d_n(a) = \frac{1}{2\pi}\int_0^{2\pi}e^{in\varphi}\rho_\varphi^{-1}(d(a))\ d\varphi .
\end{equation*}
It can be readily checked that $d_n$ is the $n$-covariant component of $d$, thus by Proposition \ref{n_covar_der_rep}, there exists a sequence $\{\beta_n(k)\}$ such that
\begin{equation*}
d_n(a) = [U^n\beta_n(\K),a].
\end{equation*}
If $[d]=0$ then it easily follows that $[d_n]=0$.  Since the operator
$$d_n(U) = U^{n+1}(\beta_n(\K+1)-\beta_n(\K))$$ 
is compact, it therefore follows that 
$$(\beta_n(k+1)-\beta_n(k))\to 0$$
as $k\to\pm\infty$. Then Proposition \ref{approx_inner_coef} implies that $d_n$ is approximately inner.  Notice that by the usual C\'esaro approximation argument, see \cite{KMRSW1}, \cite{KMRSW2}, we have:
\begin{equation*}
d(a) = \lim_{m\to\infty} \tilde{d_m}(a),
\end{equation*}
where
\begin{equation*}
\tilde{d_m}(a) = \frac{1}{m+1}\sum_{n=0}^m\sum_{j=-n}^nd_j(a).
\end{equation*}
Since all $d_j$'s are approximately inner it follows that $\tilde{d_m}$ is approximately inner. Consequently, there exists $x_{m,j}\in A$ such that
for all $a\in\mathcal{A}$ we have:
\begin{equation*}
\tilde{d_m}(a) = \lim_{j\to\infty} [x_{m,j},a].
\end{equation*}

Recall the identification $\mathcal{A} = Pol(U_r,U_r^*)$ and so to check that $d$ is approximately inner, we need only to verify convergence of inner approximations on $U_r$ and $U_r^*$.   Given $m$, choose $j(m)$ such that:
\begin{equation*}
\|\tilde{d_m}(U_r) - [x_{m,j(m)}, U_r]\| < \frac{1}{m},
\end{equation*}
and also such that the same condition holds for $U_r^*$.
Then we claim that we have convergence:
\begin{equation*}
d(a) = \lim_{m\to\infty} [x_{m,j(m)},a],
\end{equation*}
for all $a\in\mathcal{A}$.  Notice that by the triangle inequality we can estimate:
\begin{equation*}
\|d(U_r) -[x_{m,j(m)}, U_r]\| \le \|d(U_r)-\tilde{d_m}(U_r)\| + \|\tilde{d_m}(U_r) - [x_{m,j(m)}, U_r]\| <\frac{\varepsilon}{2} + \frac{1}{m} < \varepsilon,
\end{equation*}
as $\tilde{d_m}(U_r)\to d(U_r)$ as $m\to\infty$.  Since the claim holds on $U_r$, $U_r^*$, $d$ is linear and satisfies the Leibniz rule, the claim holds for all $a\in\mathcal{A}$, thus proving $d$ is approximately inner.
\end{proof}

It should be noted that this type of argument only works since $\mathcal A$ is finitely generated. 
We also get the following immediate corollary.

\begin{cor}
If $d_1,d_2:\mathcal{A}\to A$ are any two derivations such that $[d_1] = [d_2]$, then $d_1-d_2$ is approximately inner.
\end{cor}

Next we want to describe the range of the map $d\mapsto [d]$. For this we notice that we can identify $[\mathcal{A}] = Pol(S^1)\oplus Pol(S^1)$ in $A/\mathcal{K} = C(S^1)\oplus C(S^1)$ and we can easily classify all derivations $\delta: Pol(S^1)\oplus Pol(S^1\to C(S^1)\oplus C(S^1)$.
\begin{prop}\label{class_der}
If $\delta: [\mathcal{A}]\to C(S^1)\oplus C(S^1)$ is any derivation in  $C(S^1)\oplus C(S^1)$ then there exist functions $f,g\in C(S^1)$ such that
\begin{equation}\label{dfgdef_ref}
\delta = \delta_{f,g} := \left(f(x)\frac{1}{i}\frac{d}{dx}, g(x)\frac{1}{i}\frac{d}{dx}\right).
\end{equation}
\end{prop}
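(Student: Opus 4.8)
The plan is to exploit the fact that $[\mathcal{A}]\cong Pol(S^1)\oplus Pol(S^1)$ is a direct sum of two copies of the Laurent polynomial algebra, each of which is singly generated as a $*$-algebra, so that any derivation is pinned down by its values on finitely many generators, with no continuity argument needed.

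First I would record the elementary fact that a derivation annihilates idempotents: applying the Leibniz rule to $p^2=p$ gives $\delta(p)=2p\,\delta(p)$, and multiplying by $p$ yields $p\,\delta(p)=0$, hence $\delta(p)=0$. Applying this to the central idempotent $e=(1,0)\in Pol(S^1)\oplus Pol(S^1)$, and using $(a_1,0)=e\cdot(a_1,a_2)$, one gets $\delta((a_1,0))=e\,\delta((a_1,a_2))$, which lies in $C(S^1)\oplus\{0\}$ and is intrinsic to $(a_1,0)$. This lets me write $\delta=(\delta_1,\delta_2)$ with each $\delta_j:Pol(S^1)\to C(S^1)$ a derivation, where $C(S^1)$ is viewed as a $Pol(S^1)$-bimodule by multiplication; the problem is thereby reduced to classifying derivations of $Pol(S^1)$ into $C(S^1)$.

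Next, writing $z=e^{ix}$, the algebra $Pol(S^1)$ is generated by $z$ and $z^{-1}$, so $\delta_j$ is determined by $h_j:=\delta_j(z)\in C(S^1)$: from $z z^{-1}=1$ one gets $\delta_j(z^{-1})=-z^{-2}h_j$, and the Leibniz rule gives $\delta_j(z^n)=n z^{n-1}h_j$ for all $n\in\Z$, hence $\delta_j(p)=h_j z^{-1}\sum_n n a_n z^n$ for $p=\sum_n a_n z^n$. Since $\frac1i\frac{d}{dx}z^n=n z^n$, this says exactly $\delta_j=\big(h_j e^{-ix}\big)\,\frac1i\frac{d}{dx}$. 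Setting $f:=h_1 e^{-ix}$ and $g:=h_2 e^{-ix}$, which are again in $C(S^1)$, yields $\delta=\delta_{f,g}$.

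I do not expect a real obstacle here, since everything is purely algebraic on the polynomial generators. The only points requiring a little care are checking that the idempotent splitting is well defined (that the first component of $\delta((a_1,0))$ does not depend on the auxiliary choice of $a_2$, which is automatic once one observes that $\delta((a_1,0))$ itself depends only on $(a_1,0)$), and verifying that each $\delta_j$ genuinely satisfies the Leibniz rule as a map into the bimodule $C(S^1)$ before running the monomial computation.
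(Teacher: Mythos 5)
Your argument is correct and takes essentially the same route as the paper: reduce to a single derivation $\delta_j: Pol(S^1)\to C(S^1)$, note it is determined by $h_j=\delta_j(e^{ix})$, and read off $f=h_1e^{-ix}$, $g=h_2e^{-ix}$. You merely make explicit two steps the paper leaves implicit, namely the splitting over the direct sum via the central idempotent $(1,0)$ and the Leibniz computation of $\delta_j(z^n)$ for negative $n$.
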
\begin{proof}
The result will follow if we show that if $\delta : Pol(S^1)\to C(S^1)$ is any derivation then there exists a function $f(x)\in C(S^1)$ such that 
\begin{equation*}
\delta = f(x)\frac{1}{i}\frac{d}{dx}.
\end{equation*}
Since $Pol(S^1)$ is the algebra of trigonometric polynomials, a derivation is completely determined by its action on the generator $e^{ix}$.  Consequently, there exist a continuous function $f$  such that
\begin{equation*}
\delta(e^{ix}) =f(x)e^{ix},
\end{equation*} 
and it follows that $\delta$ is given by the formula above.
\end{proof}

It turns out that the range of the map $d\mapsto [d]$ is the space of all derivations, i.e. any derivation $\delta: [\mathcal{A}]\to A/\mathcal{K}$ can be lifted to a derivation $d: \mathcal{A}\to A$. This kind of lifting problem is often the most difficult step in classification of derivations with respect to an ideal in the algebra.
\begin{theo}
If $\delta_{f,g}: [\mathcal{A}]\to C(S^1)\oplus C(S^1)$ is the derivation given by \eqref{dfgdef_ref}, then there exists a derivation $d_{f,g}:\mathcal{A}\to A$ such that $[d_{f,g}] = \delta_{f,g}$.
\end{theo}\begin{proof}
Given any two continuous functions $f$ and $g$, define a Toeplitz-like operator $T_{f,g}: C(S^1)\oplus C(S^1)\to A$ by the following formula:
\begin{equation*}
T_{f,g} = P_{\ge0}f(U)P_{\ge0} + P_{<0}g(U)P_{<0},
\end{equation*}
see Proposition \ref{useful_gens}. By the continuous functional calculus, $T_{f,g}$ is well-defined and bounded, moreover $T_{f,g}\in A$.  

First we prove the lifting result when $f,g$ are trigonometric polynomials and then extend it to general continuous functions.  For $f,g\in Pol(S^1)$ define the following operators:
\begin{equation*}
T_{f,g}^+ = \sum_n U^n T_n^+(\K) \quad\textrm{ and }\quad T_{f,g}^- = \sum_n T_n^-(\K)U^n,
\end{equation*}
where the sums over $n$ are finite and
\begin{equation*}
T_n^+(k) = \left\{
\begin{aligned}
&f_n && n,k>0 \\
&g_n && n,k\le 0\\
&0 &&\textrm{else}
\end{aligned}\right. \quad\textrm{ and }\quad T_n^-(k) = \left\{
\begin{aligned}
&f_n && k\ge0, n\le 0\\
&g_n && k<0, n>0 \\
&0 &&\textrm{else,}
\end{aligned}\right.
\end{equation*}
where $f_n$ and $g_n$ are the Fourier coefficients of $f$ and $g$ respectively.  We define a derivation $d_{f,g}$ on $\mathcal{A}$ by
\begin{equation}\label{der_on_trig_poly}
d_{f,g}(a) = [T_{f,g}^+\K + \K T_{f,g}^-,a].
\end{equation}
Since $d_{f,g}$ is defined through a commutator it's clear that $d_{f,g}$ satisfies the Leibniz rule for derivations, thus the only items that must be checked is that $d_{f,g}(a)\in A$ for $a\in\mathcal{A}$ and $[d_{f,g}] = \delta_{f,g}$.  Since $\mathcal{A}$ is generated by $U$, $P_{\ge0}$, and $P_0$, we only need to check the conditions on those three generators.  For $U$ we have the following:
\begin{equation*}
\begin{aligned}
&d_{f,g}(U) = \\
&=\sum_n U^{n+1}\left((\K+1)T_n^+(\K+1) - \K T_n^+(\K)\right) + \sum_n \left(\K T_n^-(\K) - (\K-1)T_n^-(\K-1)\right)U^{n+1} \\
&= UP_{\ge0}\left(\sum_{n>0} U^nf_n P_{\ge0} + \sum_{n\le0}f_nU^nP_{\ge0}\right) + UP_{<0}\left(\sum_{n\le0}U^ng_nP_{<0} + \sum_{n>0}g_nU^nP_{<0}\right) \\
&= UT_{f,g},
\end{aligned}
\end{equation*}
where we used the definition of $T_n^\pm(k)$ and the definition of the orthogonal projections.  In particular, we have $d_{f,g}(U)\in A$.  

A similar computation for $a(\K)\in \mathcal{A}_{diag}$ gives the following general formula:
\begin{equation*}
\begin{aligned}
d_{f,g}(a(\K)) &= \left[\left(\sum_{n>0}U^nf_n\right)P_{>0}\K,a(\K)\right] + \left[\left(\sum_{n\le0}U^ng_n\right)P_{\le0}\K,a(\K)\right] \\
&\qquad + \left[\K P_{<0}\left(\sum_{n>0}g_nU^n\right),a(\K)\right] + \left[\K P_{\ge0}\left(\sum_{n\le0}f_nU^n\right),a(\K)\right].
\end{aligned}
\end{equation*}
It is clear from the above formula that we have 
$$d_{f,g}(P_{\ge0}) = d_{f,g}(P_0) = 0.$$  
Thus $d_{f,g}$ is  a well-defined derivation $\mathcal{A}\to A$.  

Finally we need to check that $[d_{f,g}] = \delta_{f,g}$. The ``restriction'' to the boundary map 
$$\sigma=(\sigma_+, \sigma_-) : A \to C(S^1)\oplus C(S^1)$$ 
is defined by $\sigma_\pm : A\to C(S^1)$ with 
$$\sigma_\pm(U) = e^{ix} \textrm{ and } \sigma_\pm(a(\K)) = a(\pm\infty),$$ 
where $a(\pm\infty)$ are the limits of $a(k)$ at $\pm\infty$.  The map $\sigma$ is the factor homomorphism and we have 
$$[d_{f,g}(a)] = \sigma(d_{f,g}(a))$$ 
for all $a\in\mathcal{A}$. From the definition of $T_{f,g}$ we have the formula: 
$$\sigma(T_{f,g}) = (f(x),g(x)),$$ 
and, obviously, also 
$$\sigma(d_{f,g}(P_{\ge0})) = \sigma(d_{f,g}(P_0)) = 0.$$  
Consequently, we obtain:
\begin{equation*}
\sigma(d_{f,g}(U)) = (\sigma_+ \oplus \sigma_-)(UT_{f,g}) = (f(x)e^{ix}, g(x)e^{ix}) = \delta_{f,g}(e^{ix}).
\end{equation*}
Therefore, for $f,g\in Pol(S^1)$, there exists a derivation $d_{f,g}:\mathcal{A}\to A$ such that $[d_{f,g}] = \delta_{f,g}$.

Next we consider arbitrary $f,g\in C(S^1)$.  We will define $d_{f,g}: \mathcal{A}\to A$ by an approximation argument.
There exist sequences $\{f_m\},\{g_m\}\in Pol(S^1)$ such that $f_m,g_m$ converge to $f,g$ in $C(S^1)$ respectively.  For any $a\in\mathcal{A}$ we want to define $d_{f,g}(a)$ as the limit:
\begin{equation*}
d_{f,g}(a) = \lim_{m\to\infty} d_{f_m,g_m}(a),
\end{equation*}
where $d_{f_m,g_m}(a)$ were defined in equation \eqref{der_on_trig_poly}.  Such a limit exists because it exists on the generators $U$, $P_{\ge0}$, and $P_0$ of $\mathcal{A}$. In fact, we have:
$$d_{f,g}(U) = \lim_{m\to\infty} UT_{f_m,g_m}= UT_{f,g},$$
and similarly
$$d_{f,g}(P_{\ge0}) = d_{f,g}(P_0) = 0.$$  
This completes the proof. Notice that we could not have simply defined $d_{f,g}$ by the above formulas on generators as it is not immediately clear that they unambiguously extend to a derivation on all of $\mathcal A$.
\end{proof}

The following observation immediately follows from the above proof.
\begin{cor} For any $f,g\in C(S^1)$ there exists unique derivation $d_{f,g}:\mathcal{A}\to A$ such that:
$$d_{f,g}(U) =  UT_{f,g},\ \ d_{f,g}(P_{\ge0}) = d_{f,g}(P_0) = 0.$$
Moreover, $[d_{f,g}] = \delta_{f,g}$ where $\delta_{f,g} :[\mathcal{A}] \to C(S^1) \oplus C(S^1)$ is given by
\begin{equation*}
\delta_{f,g} = \left(f(x)\frac{1}{i}\frac{d}{dx}, g(x)\frac{1}{i}\frac{d}{dx}\right).
\end{equation*}

\end{cor}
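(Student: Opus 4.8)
The corollary is flagged as an immediate consequence of the theorem just proved, so the plan is short: read off existence and the boundary identity from that proof, and append a one-line uniqueness argument. First I would note that for $f,g\in Pol(S^1)$ the explicit commutator derivation \eqref{der_on_trig_poly} was already shown to satisfy $d_{f,g}(U)=UT_{f,g}$ and $d_{f,g}(P_{\ge0})=d_{f,g}(P_0)=0$, and that for general $f,g\in C(S^1)$ the approximation $d_{f,g}(a)=\lim_m d_{f_m,g_m}(a)$, with trigonometric polynomials $f_m\to f$ and $g_m\to g$ uniformly, exists on the three algebra generators and delivers $d_{f,g}(U)=\lim_m UT_{f_m,g_m}=UT_{f,g}$ — using continuity of the functional calculus and boundedness of $T_{\cdot,\cdot}$ — together with $d_{f,g}(P_{\ge0})=d_{f,g}(P_0)=0$. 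The identity $[d_{f,g}]=\delta_{f,g}$ then follows at once from the proof of the theorem: $[d_{f,g}(a)]=\sigma(d_{f,g}(a))$, and $\sigma$ is a homomorphism with $\sigma(U)=(e^{ix},e^{ix})$ and $\sigma(T_{f,g})=(f(x),g(x))$, so $\sigma(d_{f,g}(U))=(f(x)e^{ix},g(x)e^{ix})=\delta_{f,g}(e^{ix})$, while on $P_{\ge0}$ and $P_0$ both sides vanish; since $\sigma$ is surjective, $[\mathcal{A}]=\sigma(\mathcal{A})$ is generated by the images of $U$, $U^{-1}$, $P_{\ge0}$, $P_0$, which pins down $[d_{f,g}]$.

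For uniqueness I would invoke Proposition \ref{useful_gens}: $\mathcal{A}$ is the unital algebra generated by $U$, $U^{-1}$, $P_{\ge0}$, $P_0$. Any derivation $d:\mathcal{A}\to A$ obeys the Leibniz rule, so $d(I)=0$ and, differentiating $UU^{-1}=I$, $d(U^{-1})=-U^{-1}d(U)U^{-1}$; hence $d$ is completely determined by the triple $d(U)$, $d(P_{\ge0})$, $d(P_0)$. Therefore, if two derivations $\mathcal{A}\to A$ agree on $U$, $P_{\ge0}$, $P_0$, their difference is a derivation vanishing on a generating set and thus vanishes on all of $\mathcal{A}$. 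Feeding in the prescribed values $d(U)=UT_{f,g}$, $d(P_{\ge0})=d(P_0)=0$ gives uniqueness of $d_{f,g}$.

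I do not expect any real obstacle — this is a corollary of the preceding theorem. The one point worth flagging is the order of the logic: a uniqueness statement about derivations is only meaningful once one knows that a derivation with the prescribed values on the generators actually exists, i.e. that the prescription is consistent with all algebraic relations among $U$, $U^{-1}$, $P_{\ge0}$, $P_0$ — precisely the content of the theorem — after which the "determined on generators" argument makes uniqueness immediate.
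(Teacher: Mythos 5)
Your proposal is correct and matches the paper's intent: the paper offers no separate proof, stating only that the corollary ``immediately follows from the above proof,'' and your reading off of existence and $[d_{f,g}]=\delta_{f,g}$ from the theorem's construction, plus the standard uniqueness argument via the generators $U$, $U^{-1}$, $P_{\ge0}$, $P_0$ of Proposition \ref{useful_gens}, is exactly the intended filling-in. Your closing remark about the order of logic (existence via the theorem first, then uniqueness on generators) correctly echoes the caveat the authors themselves make at the end of the theorem's proof.
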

Putting together all of these results, we get the following classification theorem, which is the first main result of this paper.

\begin{theo}
Let $d:\mathcal{A}\to A$ be any derivation.  There are unique functions $f,g\in C(S^1)$ such that $d = \tilde{d} + d_{f,g}$, where $\tilde{d}$ is an approximately inner  derivation.
\end{theo}

\section{Reflection Positivity of Laplace-type Operators}

This section contains our main results on reflection positivity of Laplace-type operators constructed using special kind of derivations described and classified above.

Any derivation $d:\mathcal{A}\to A$ that satisfies the relation: 
$$\rho_\varphi(d(a)) = d(\rho_{\varphi}(a))$$ 
will be referred to as a $\rho_\varphi$-{\it invariant} derivation. Similarly, any derivation $d:\mathcal{A}\to A$ that satisfies the relation  
$$d(\rho_\varphi(a))= e^{-i\varphi}\rho_\varphi(d(a))$$ 
for all $a\in \mathcal{A}$ will be referred to as a $\rho_\varphi$-{\it covariant} derivation. Those are special cases of $n$-covariant derivations from the previous section, corresponding to $n=0$ and $n=1$ respectively. 

In order to construct interesting, geometrical Laplace-type operators we want to implement such derivations as operators in the Hilbert space $\mathcal{H}$ defined in \eqref{Hdef_ref}.
In \cite{KMRSW1} and \cite{KMR}, the invariant and covariant derivations were easily implementable because the representation of the algebra in the Hilbert space considered in those papers had a cyclic vector.  In the present situation this is no longer the case, thus we need a new argument to show that the invariant and covariant derivations can be implemented us unbounded operators on domain $\mathcal{D}$ defined in \eqref{Ddef_ref}.

The following proposition gives a description of such invariant and covariant implementations and uses an approximate identity argument. 

\begin{prop}\label{der_implement} (Implementation)

\begin{itemize} 
\item Let $d:\mathcal{A}\to A$, $d(a)=[\beta(\K),a]$, be an invariant derivation in $A$. Then 
there exists a densely-defined operator $D :\mathcal{D}\to \mathcal{H}$ such that 
\begin{equation*}
[D,\pi(a)]f = \pi(d(a))f,
\end{equation*}
for $f\in\mathcal{D}$, $a\in\mathcal{A}$ and 
$$U_\varphi DU_\varphi^{-1} f = Df$$ 
for $f\in\mathcal{D}$.  Moreover there exists a sequence $\alpha(k)$ such that the implementation has the following form:
\begin{equation*}
Df = \beta(\K)f - f\alpha(\K).
\end{equation*}

\item Let $d:\mathcal{A}\to A$, $d(a)=[U\beta(\K),a]$, be a covariant derivation in $A$. Then 
there exists a densely-defined operator $D :\mathcal{D}\to \mathcal{H}$ such that 
\begin{equation*}
[D,\pi(a)]f = \pi(d(a))f,
\end{equation*}
for $f\in\mathcal{D}$, $a\in\mathcal{A}$ and 
$$U_\varphi DU_\varphi^{-1} f = e^{i\varphi} Df$$ for 
$f\in\mathcal{D}$.  Moreover there exists a sequence $\alpha(k)$ such that the implementation has the following form:
\begin{equation*}
Df = U\beta(\K)f - fU\alpha(\K).
\end{equation*}
\end{itemize}
\end{prop}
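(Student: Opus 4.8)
The plan is to implement $d$ on $\mathcal{H}$ by the very same commutator formula that defines it on $A$: in the invariant case to take
\[
D=\pi(\beta(\K))-\pi'(\beta(\K)),
\]
and in the covariant case $D=\pi(U\beta(\K))-\pi'(U\beta(\K))$, from which the asserted $\alpha$ can be read off (and, as it will turn out, can simply be taken to be $\beta$ itself). The point requiring care — and the reason the cyclic-vector argument of \cite{KMRSW1}, \cite{KMR} does not carry over verbatim — is that $\beta(\K)$ is \emph{unbounded}: a function with convergent increments may grow linearly, so $\pi'(\beta(\K))$ is not one of the bounded operators $\pi'(a)$, $a\in A$, furnished in Section 2, and $D$ must be given meaning directly on the domain $\mathcal{D}$, which (being dense) is the natural choice.

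I would do this with an approximate-identity/truncation device. Choose $\beta_{N}(\K)\in\mathcal{A}_{diag}$ with $\beta_{N}(k)=\beta(k)$ for $|k|\le N$ and $\beta_{N}$ eventually constant, and set $D_{N}f=\pi(\beta_{N}(\K))f-\pi'(\beta_{N}(\K))f$, which is a bona fide bounded operator by Section 2. For $f=\sum U^{n}f_{n}(\K)\in\mathcal{D}$ there are finitely many $f_{n}$, each supported in a fixed finite window of $k$'s, and by the commutation relation \eqref{CommRel} the arguments of $\beta_{N}$ that ever occur in $D_{N}f$ stay in a fixed finite set independent of $N$; hence the sequence $D_{N}f$ is eventually constant in $N$. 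Setting $Df:=\lim_{N}D_{N}f$ defines a linear map $D:\mathcal{D}\to\mathcal{D}\subset\mathcal{H}$, and unwinding the limit exhibits it in the claimed form $Df=\beta(\K)f-f\alpha(\K)$ with $\alpha=\beta$. (Adding to $D$ any right multiplication $\pi'(\gamma(\K))$ by a bounded diagonal operator leaves all the properties below intact and replaces $\alpha$ by $\beta-\gamma$, which is why the statement only asserts the existence of some such $\alpha$.)

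The intertwining relation is then essentially automatic from the fact that left and right multiplications commute. For $a\in\mathcal{A}$ one has
\[
[D_{N},\pi(a)]=[\pi(\beta_{N}(\K)),\pi(a)]-[\pi'(\beta_{N}(\K)),\pi(a)]=\pi([\beta_{N}(\K),a]),
\]
since $\pi'(\beta_{N}(\K))$ commutes with $\pi(a)$. Applying this to $f\in\mathcal{D}$ and letting $N\to\infty$: the left side converges to $[D,\pi(a)]f$ because $D_{N}$ stabilizes on the $\mathcal{D}$-vectors $f$ and $\pi(a)f$ and $\pi(a)$ is bounded, while $[\beta_{N}(\K),a]f$ is in fact eventually equal to $d(a)f$ (only finitely many values of $k$ are involved, and $d(a)\in A$ acts by a finite Fourier sum). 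Hence $[D,\pi(a)]f=\pi(d(a))f$. For the $\rho_{\varphi}$-invariance, recall that $U_{\varphi}$ acts by conjugation by the diagonal unitary $e^{i\varphi\K}$; since this commutes with every diagonal operator, $U_{\varphi}D_{N}U_{\varphi}^{-1}=D_{N}$ for all $N$, so $U_{\varphi}DU_{\varphi}^{-1}=D$ on $\mathcal{D}$.

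The covariant case runs in exactly the same way with $\beta(\K),\beta_{N}(\K)$ replaced by $U\beta(\K),U\beta_{N}(\K)$; the one new input is that conjugating $U\beta_{N}(\K)$ by $e^{i\varphi\K}$ now produces an extra scalar of modulus one, because $e^{i\varphi\K}$ has to be pushed past the single factor of $U$ using \eqref{CommRel}, which yields the factor $e^{i\varphi}$ in $U_{\varphi}DU_{\varphi}^{-1}=e^{i\varphi}D$. I expect the only genuine obstacle to be the bookkeeping forced by the unboundedness of $\beta(\K)$: one must verify that the truncation limits exist throughout $\mathcal{D}$ (they do, nearly for free, since $\mathcal{D}$-vectors are finitely supported in the $\ell^{2}(\Z^{2})$ picture), that the commutator identity is not spoiled in passing from the bounded $D_{N}$ to the unbounded $D$, and — the step the cyclic vector handed over for free in \cite{KMRSW1}, \cite{KMR} — that $D$ genuinely maps back into $\mathcal{H}$.
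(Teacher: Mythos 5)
Your construction is sound and does establish the existence claim, but it is not the route the paper takes, and the two arguments carry different information. You build one implementation explicitly: truncate $\beta$ to eventually constant $\beta_N$, note that $D_N=\pi(\beta_N(\K))-\pi'(\beta_N(\K))$ stabilizes on each finitely supported $f\in\mathcal{D}$, and pass to the limit; the verification that $[\pi'(\beta_N(\K)),\pi(a)]=0$ and that $\pi([\beta_N(\K),a])f$ eventually equals $\pi(d(a))f$ is correct, as is the covariance computation, and you obtain the stated form with $\alpha=\beta$ (or $\alpha=\beta-\gamma$ after adding a right multiplication). The paper argues in the opposite direction: it starts from an \emph{arbitrary} operator $D$ on $\mathcal{D}$ satisfying the commutator and covariance identities, applies it to the approximate identity $\chi_N(\K)$ (the characteristic functions of $\{|k|\le N\}$) --- so the truncation is of the identity of the algebra rather than of $\beta$ --- shows via $\chi_N\chi_M=\chi_N$ that the resulting sequences $\gamma_N(k)$ stabilize, and then uses the Leibniz identity $Df=D(f\chi_N(\K))=\pi(d(f))\chi_N(\K)+fD(\chi_N(\K))$ to conclude that \emph{every} such $D$ has the form $\beta(\K)f-f\alpha(\K)$ (resp. $U\beta(\K)f-fU\alpha(\K)$); the existence direction, which is the whole of your effort, is dispatched in one sentence as the converse. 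So the paper proves a classification of all invariant and covariant implementations, which your argument does not address. If the ``moreover'' clause is read as referring only to the particular $D$ being constructed, your proof is complete; but the classification is what licenses the paper's later restriction of the search for a $\Theta$-invariant implementation to the two-parameter family $D_{\alpha,\beta}$, so you should supplement your construction with the converse step --- your own stabilization idea, applied to $D\chi_N(\K)$ for an arbitrary $D$ with the stated properties, gives it almost verbatim.
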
\begin{proof} Both parts have almost identical proofs; we will concentrate below on the covariant case.
Observe that we have a natural decomposition: 
\begin{equation}\label{Hdecomp}
\mathcal{H} = \bigoplus_{n\in\Z}\mathcal{H}_n,
\end{equation}
where 
$$\mathcal{H}_n:= \left\{f = U^nf_n(\K): \sum_{k\in\Z}|f_n(k)|^2 <\infty\right\}.
$$
This is precisely the spectral decomposition for $U_\varphi$ as we have $U_\varphi: \mathcal{H}_n\to\mathcal{H}_n$ and 
$$U_\varphi|_{\mathcal{H}_n} = e^{in\varphi}I.$$ 
We have analogous subspace decomposition: $\mathcal{D} = \bigoplus_{n\in\Z} \mathcal{D}_n$ with $\mathcal{D}_n\subseteq \mathcal{H}_n$.  

Suppose $D$ is an implementation of a covariant derivation as in the statement of the proposition. Given $f\in\mathcal{H}_n$, we compute:
\begin{equation*}
U_\varphi(Df) =e^{i\varphi} D(U_\lambda f) = e^{i(n+1)\varphi}Df,
\end{equation*}
implying that $D:\mathcal{D}_n\to \mathcal{H}_{n+1}$.  

Consider the following characteristic function:
\begin{equation*}
\chi_N(k) = \left\{
\begin{aligned}
&1 &&\textrm{for }|k|\le N \\
&0 &&\textrm{otherwise.}
\end{aligned}\right.
\end{equation*}
Then $\chi_N(\K)\in\mathcal{D}$ and so there exists $\gamma_N(\K)\in\mathcal{H}_N$ such that $$D(\chi_N(\K)) = U\gamma_N(\K).$$  
Since for $N\le M$ we have 
$$\chi_N(\K)\chi_M(\K) = \chi_N(\K),$$ 
applying $D$ to both sides we obtain: 
$$D(\pi(\chi_N(\K))\chi_M(\K)) = D(\chi_N(\K)),$$ and therefore
\begin{equation*}
[D,\pi(\chi_N(\K))]\chi_M(\K) + \chi_N(\K)D(\chi_M(\K)) = D(\chi_N(\K)).
\end{equation*}
Using the properties of $D$ we obtain the following relation:
\begin{equation*}
\pi(d(\chi_N(\K)))\chi_M(\K) + \chi_N(\K)U\gamma_M(\K) = U\gamma_N(\K),
\end{equation*}
or equivalently:
\begin{equation*}
U\beta(\K)(\chi_N(\K)-\chi_N(\K+1)) + U\chi_N(\K +1)\gamma_M(\K) = U\gamma_N(\K).
\end{equation*}
It follows from the above equation, and the definition of $\chi_N(\K)$, that if $-N\le k, k+1\le N$, we have: 
$$\gamma_M(k) = \gamma_N(k)$$
for every $M>N$.  Thus, if we fix $k$, then $N\mapsto \gamma_N(k)$ is eventually constant and hence the following limit exists:
\begin{equation*}
\gamma(k) = \lim_{N\to\infty}\gamma_N(k).
\end{equation*}

Given $f\in\mathcal{D}$ choose $N$ large enough such that $f\chi_N(\K) = f$ and $\gamma_N(\K) = \gamma(\K)$.  Such a choice can be made since $f$ has finite support. Then, we obtain the following formula:
\begin{equation*}
\begin{aligned}
Df &= D(f\chi_n(\K)) = [D,\pi(f)]\chi_n(\K) + \pi(f)D(\chi_N(\K))= \pi(d(f))\chi_N(\K) + fU\gamma_N(\K) \\
&= \pi(d(f)) + fU\gamma(\K)=U\beta(\K)f - fU(\beta(\K)-\gamma(\K)),
\end{aligned}
\end{equation*}
as in the statement of the proposition.

Conversely, for any sequence  $\alpha(k)$ the formula $Df = U\beta(\K)f - fU\alpha(\K)$ clearly defines an operator $D :\mathcal{D}\to \mathcal{H}$ implementing the derivation $d(a)=[U\beta(\K),a]$, completing the proof.
\end{proof}

\subsection{Reflection Positivity: Invariant Case}
We describe here the easier of two interesting reflection positivity results.
Let $d_\beta : \mathcal{A}\to A$, given by $d_\beta(a) = [\beta(\K),a]$, be an invariant derivation.  We proved in Proposition \ref{der_implement} that such a derivation can be implemented by a densely-defined operator $D_{\alpha,\beta} : \mathcal{D} \to \mathcal{H}$ given by:
\begin{equation*}
D_{\alpha,\beta}f = \pi(\beta(\K))f - \pi'(\alpha(\K))f = \sum_{n\in\Z}U^n(\beta(\K+n) - \alpha(\K))f_n(\K).
\end{equation*}

If $\alpha(k)$ and $\beta(k)$ are real then the operator $D_{\alpha,\beta}$ is symmetric and, instead of constructing a Laplace-type operator, we can work with $D_{\alpha,\beta}$ directly.

If $\alpha(k) \neq \beta(j)$ for all $j,k\in\Z$, then it is clear that $D_{\alpha,\beta}$ is an invertible operator and we have the following formula for its inverse:
\begin{equation*}
D_{\alpha,\beta}^{-1}f = \sum_{n\in\Z}U^n(\beta(\K+n) - \alpha(\K))^{-1}f_n(\K).
\end{equation*}

Part of the reflection positivity scheme is that the implementation $D_{\alpha,\beta}$ has to be invariant with respect to $\Theta$:
$$\Theta D_{\alpha,\beta}\Theta = D_{\alpha,\beta}.$$  
When this happens is the subject of the following proposition.
\begin{prop}
With the above notation we have that $D_{\alpha, \beta}$ is a $\Theta$ - invariant implementation of an invariant derivation $d_\beta$ if and only if 
$$\alpha(k) = -\beta(-k)$$ for all $k\in\Z$.
\end{prop}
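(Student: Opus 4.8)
The plan is to compute the conjugated operator $\Theta D_{\alpha,\beta}\Theta$ on the domain $\mathcal{D}$ (which $\Theta$ preserves, since by \eqref{theta_decomp} $\Theta$ acts on the $n$-th component $U^nf_n(\K)$ by the substitution $f_n(\cdot)\mapsto f_n(-\cdot-n)$, which keeps finite support finite), put it back in the normal form $\sum_m U^m(\,\cdot\,)(\K)$, and compare it term by term with $D_{\alpha,\beta}$. Since both operators are ``diagonal'' with respect to the joint decomposition of $\mathcal{H}$ by the shift index $n$ and the label $\K$, the equality $\Theta D_{\alpha,\beta}\Theta=D_{\alpha,\beta}$ reduces to an equality of scalar multipliers, which will produce exactly the asserted relation between $\alpha$ and $\beta$.

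Concretely I would run the computation in one of two equivalent ways. \emph{Operator-theoretic way:} write $D_{\alpha,\beta}=\pi(\beta(\K))-\pi'(\alpha(\K))$; by Proposition~\ref{morph_anti_on_gens} we have $\Theta\pi(a(\K))\Theta=\pi'(a(-\K))$, and since $\Theta^2=I$ and the reflection $\theta$ is an involution on generators this dually gives $\Theta\pi'(a(\K))\Theta=\pi(a(-\K))$; hence $\Theta D_{\alpha,\beta}\Theta=\pi'(\beta(-\K))-\pi(\alpha(-\K))$, so $\Theta$-invariance becomes the identity $\pi(\beta(\K)+\alpha(-\K))=\pi'(\alpha(\K)+\beta(-\K))$ of operators on $\mathcal{D}$. \emph{Coordinate way:} from \eqref{theta_decomp} together with $(D_{\alpha,\beta}f)_n(k)=(\beta(k+n)-\alpha(k))f_n(k)$, a direct substitution gives $(\Theta D_{\alpha,\beta}\Theta f)_n(k)=(\beta(-k)-\alpha(-k-n))f_n(k)$, so $\Theta$-invariance is the scalar identity $\beta(-k)-\alpha(-k-n)=\beta(k+n)-\alpha(k)$ for all $n,k\in\Z$.

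From there the conclusion is immediate: testing the operator identity of the first route on elements $U^nf_n(\K)$ with $f_n$ supported at a single point and letting $n$ vary (equivalently, setting $k=0$ in the scalar identity of the second route and relabelling) shows that $\alpha(k)+\beta(-k)$ must be independent of $k$, and conversely that this constancy makes both sides agree for every $n$ and $k$; using the freedom in the choice of $\beta$ for a $0$-covariant (invariant) derivation from Proposition~\ref{n_covar_der_rep} — $\beta$, and correspondingly $\alpha$, is only determined up to an additive constant — this normalizes to $\alpha(k)=-\beta(-k)$. I do not expect a real obstacle; the only points needing care are the bookkeeping in composing $\Theta$, $D_{\alpha,\beta}$, $\Theta$ while tracking the coupled shift/label substitution $k\mapsto -k-n$ that $\Theta$ performs on the $n$-th component, and the (routine) remark that a left-multiplication and a right-multiplication diagonal operator can coincide on $\mathcal{D}$ only when the underlying sequences are one and the same constant — which is exactly where the ``for all $n$'' quantifier kills everything but the diagonal-in-$k$ data.
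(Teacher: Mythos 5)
Your proposal is correct and follows essentially the same route as the paper: conjugate $D_{\alpha,\beta}=\pi(\beta(\K))-\pi'(\alpha(\K))$ by $\Theta$ using Proposition~\ref{morph_anti_on_gens}, reduce $\Theta$-invariance to the identity $(\beta(\K)+\alpha(-\K))f=f(\alpha(\K)+\beta(-\K))$ on $\mathcal{D}$, test on rank-one (or $U^n\chi_N(\K)$-type) elements to conclude $\alpha(k)+\beta(-k)=\mathrm{const}$, and absorb the constant using the additive-constant freedom in $\beta$ for an invariant derivation. Your coordinate-form verification $(\Theta D_{\alpha,\beta}\Theta f)_n(k)=(\beta(-k)-\alpha(-k-n))f_n(k)$ is accurate and matches the paper's computation.
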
\begin{proof}
Using Proposition \ref{morph_anti_on_gens}, we get:
\begin{equation*}
\Theta D_{\alpha,\beta}\Theta = \Theta\pi(\beta(\K))\Theta - \Theta\pi'(\alpha(\K))\Theta = \pi'(\beta(-\K))-\pi(\alpha(-\K)) = -D_{\theta\alpha,\theta\beta}
\end{equation*}
It follows that if $\Theta D_{\alpha,\beta}\Theta = D_{\alpha,\beta}$ we have to have for all $f\in\mathcal D$:
$$ (\beta(\K) + \alpha(-\K))f=f(\alpha(\K) + \beta(-\K)).
$$
Choosing $f=U^n\chi_N(\K)$ and varying $n$ and $N$ we conclude that we have to have the following relation for every $k$:
$$\alpha(k) + \beta(-k)= \textrm{const.}$$ 
This finishes the proof as, given $d_\beta$, the sequence $\beta(k)$ is determined up to additive constant.
\end{proof}

In light of this proposition, we will now label the $\Theta$ - invariant implementation by $D_\beta$, which is given by the formula 
\begin{equation*}
D_\beta = \pi(\beta(\K))+\pi'(\beta(-\K)),
\end{equation*}
or, equivalently,
\begin{equation*}
D_\beta f = \sum_{n\in\Z}U^n(\beta(\K+n) + \beta(-\K))f_n(\K)
\end{equation*}
for $f\in\mathcal{H}$.

The following is the reflection positivity inequality in the invariant case.
\begin{theo}
If $\beta(k)> 0$ for all $k\in\Z$, then $D_\beta$ is invertible and for all $f\in\mathcal{H}^+$ we have:
\begin{equation*}
\langle \Theta f, D_\beta^{-1}\rangle \ge 0.
\end{equation*}
\end{theo}\begin{proof}
The proof of the theorem is computational and it is simplified by the fact that the implementation operator $D_\beta$ is diagonal. Intriguingly, like in the continuous case, the left-hand side of the inequality can be expressed as an integral (sum) over ``the boundary", i.e. the reflection invariant subset $n+2k=0$ of $\Z^2$.

The inner product in the inequality can be separated into three parts as follows:
\begin{equation*}
\begin{aligned}
\langle \Theta f, D_\beta^{-1}\rangle &= \sum_{n,k\in\Z}\overline{f}_n(-k-n)(\beta(k+n)+\beta(-k))^{-1}f_n(k) \\
&= \left(\sum_{n+2k<0} + \sum_{n+2k=0} + \sum_{n+2k>0}\right)\overline{f}_n(-k-n)(\beta(k+n)+\beta(-k))^{-1}f_n(k).
\end{aligned}
\end{equation*}
Since $f\in\mathcal{H}^+$ this means $f_n(k) = 0$ on $n+2k<0$, thus the sum over $n+2k<0$ is zero.  Moreover by Proposition \ref{theta_H_plus_H_minus} we have $f_n(-k-n)=0$ on $n+2k>0$, hence the sum over $n+2k>0$ is zero.  Consequently we have
\begin{equation*}
\langle \Theta f, D_\beta^{-1}\rangle = \sum_{n+2k=0}\overline{f}_n(-k-n)(\beta(k+n)+\beta(-k))^{-1}f_n(k) = \frac{1}{2}\sum_{n-\textrm{even}}\left|f_n\left(-\frac{n}{2}\right)\right|^2\beta\left(\frac{n}{2}\right)^{-1}\ge 0. 
\end{equation*}

\end{proof}

\subsection{Reflection Positivity: Covariant Case}
As in the invariant case, let $d_\beta:\mathcal{A}\to A$ given by $d_\beta(a) = [U\beta(\K),a]$, be a covariant derivation in $A$. Its implementations, according to Proposition \ref{der_implement}, are operators $D_{\alpha,\beta}:\mathcal{D}\to \mathcal{H}$ given by:
\begin{equation*}
D_{\alpha,\beta}f = U\beta(\K)f - fU\alpha(\K) = \sum_{n\in\Z}U^{n+1}(\beta(\K+n)f_n(\K)-\alpha(\K)f_n(\K+1)),
\end{equation*}
or equivalently:
\begin{equation*}
D_{\alpha,\beta} = \pi(U\beta(\K)) - \pi'(U\alpha(\K)).
\end{equation*}
We now request $\Theta$ invariance of the ``Laplacian'' operator:
\begin{equation*}
\Theta D_{\alpha,\beta}^*D_{\alpha,\beta}\Theta = D_{\alpha,\beta}^*D_{\alpha,\beta}.
\end{equation*}
The conditions for this to be true are described in the next statement.
\begin{prop}
With the above notation we have $\Theta D_{\alpha,\beta}^*D_{\alpha,\beta}\Theta =  D_{\alpha,\beta}^*D_{\alpha,\beta}$ 
if and only if $\alpha(k) = \pm\beta(-k-1)$ for all $k\in\Z$.
\end{prop}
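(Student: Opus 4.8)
The plan is to compute $\Theta D_{\alpha,\beta}^*D_{\alpha,\beta}\Theta$ explicitly using Proposition \ref{morph_anti_on_gens} and compare it with $D_{\alpha,\beta}^*D_{\alpha,\beta}$. The first step is to rewrite $D_{\alpha,\beta}=\pi(U\beta(\K))-\pi'(U\alpha(\K))$ and note that conjugation by $\Theta$ swaps the roles of $\pi$ and $\pi'$: from $\Theta\pi(a)\Theta=\pi'(\theta(a))$ and $\Theta\pi'(a)\Theta=\pi(\theta(a))$ (the latter following by applying $\Theta\cdot\Theta$ and $\Theta^2=I$), together with $\theta(U\beta(\K))=\theta(\beta(\K))\theta(U)=\beta(-\K)U=U\beta(-\K-1)$ using the commutation relation \eqref{CommRel}. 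Thus $\Theta D_{\alpha,\beta}\Theta=\pi'(U\beta(-\K-1))-\pi(U\alpha(-\K-1))$. Since $\Theta$ is self-adjoint with $\Theta^2=I$, we have $\Theta D_{\alpha,\beta}^*D_{\alpha,\beta}\Theta=(\Theta D_{\alpha,\beta}\Theta)^*(\Theta D_{\alpha,\beta}\Theta)$, so it suffices to understand the operator $\widetilde D:=\Theta D_{\alpha,\beta}\Theta=\pi'(U\beta(-\K-1))-\pi(U\alpha(-\K-1))$ and when $\widetilde D^*\widetilde D=D_{\alpha,\beta}^*D_{\alpha,\beta}$.

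The second step is to observe that both $D_{\alpha,\beta}$ and $\widetilde D$ are differences $\pi(\,\cdot\,)-\pi'(\,\cdot\,)$ of a left and a right multiplication operator, each built from a weighted shift in the single spectral strip shift by $+1$. A clean way to organize the computation is to work in the coordinates of \eqref{Hdef_ref}: $D_{\alpha,\beta}$ sends $f_n(k)$-data to level $n+1$ via $\beta(k+n)f_n(k)-\alpha(k)f_n(k+1)$, and $\widetilde D$ does the analogous thing with $\beta(-\K-1)$ on the right and $\alpha(-\K-1)$ on the left; writing both out and forming $D^*D$ gives a banded (tridiagonal in the shift index, diagonal-type in $k$) operator whose entries are explicit quadratic expressions in $\beta,\alpha$ and their shifts. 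Comparing the two banded operators entrywise, the off-diagonal (in the appropriate index) terms force a relation of the form $\alpha(k)\overline{\beta(k+n)}$-type coincidences, which after matching indices collapses to $|\alpha(k)|^2=|\beta(-k-1)|^2$ and a compatible phase condition, i.e. $\alpha(k)=\pm\beta(-k-1)$; the sign is global because the cross-terms must match simultaneously for all $k$. Conversely, substituting $\alpha(k)=\pm\beta(-k-1)$ back into $\widetilde D$ one checks directly that $\widetilde D=\mp D_{\alpha,\beta}$ up to a sign absorbed in squaring (or more precisely that $\widetilde D^*\widetilde D=D_{\alpha,\beta}^*D_{\alpha,\beta}$), giving the ``if'' direction.

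I expect the main obstacle to be the ``only if'' direction: extracting the pointwise relation $\alpha(k)=\pm\beta(-k-1)$ from the operator identity $\widetilde D^*\widetilde D=D_{\alpha,\beta}^*D_{\alpha,\beta}$ requires being careful that it is the full product $D^*D$, not $D$ itself, that is being matched, so a priori there could be more solutions than for the naive equation $\widetilde D=\pm D_{\alpha,\beta}$. The way around this is to test the identity on the vectors $U^n\chi_N(\K)\in\mathcal D$ (as in the invariant case) and, crucially, also on $U^n E_k$-type localized vectors, which isolates individual matrix entries of $D^*D$; the diagonal entries give $|\beta(k+n)|^2+|\alpha(k)|^2$-type sums that are automatically symmetric, while the genuinely constraining information sits in the single off-diagonal band, and there the identity reads (after the change of variables $k\mapsto -k-n$ induced by $\Theta$) exactly $\overline{\alpha(k)}\,\beta(k+1)=\overline{\beta(-k-1)}\,\alpha(-k)$-type relations for all $k,n$, from which one reads off both the modulus and the sign consistency. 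Once the pointwise relation is in hand, the converse is a short substitution, and the proof concludes.
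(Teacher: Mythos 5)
Your setup is sound and close to the paper's: the identity $\Theta D_{\alpha,\beta}^*D_{\alpha,\beta}\Theta=(\Theta D_{\alpha,\beta}\Theta)^*(\Theta D_{\alpha,\beta}\Theta)$, the computation $\Theta D_{\alpha,\beta}\Theta=\pi'(U\beta(-\K-1))-\pi(U\alpha(-\K-1))$, and the ``if'' direction (where this operator becomes $\mp D_{\alpha,\beta}$) are all correct. The paper does the same comparison, organized instead as a term-by-term matching of the four $\pi(\cdot)\pi'(\cdot)$ summands of the Laplacian via a factorization lemma ($a(\K)fb(\K)=a'(\K)fb'(\K)$ for all $f\in\mathcal D$ forces $a'=\mu a$, $b'=\mu^{-1}b$), which is equivalent to your entrywise comparison of the tridiagonal operators on each $\mathcal{H}_n$.

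The genuine gap is in your ``only if'' direction, specifically the claim that the diagonal entries are ``automatically symmetric'' and that all the constraining information sits in the off-diagonal band. That is backwards. Write out the two Jacobi operators on $\mathcal{H}_n$: the off-diagonal matching reads $\alpha(k)\overline{\beta(m)}=\beta(-k-1)\overline{\alpha(-m-1)}$ for all $k$ and $m=k+n$ independently, and the rank-one factorization over the two free indices yields only $\alpha(k)=\mu\,\beta(-k-1)$ for a single \emph{real} nonzero constant $\mu$ --- it says nothing about $|\mu|$, so it does not give $|\alpha(k)|^2=|\beta(-k-1)|^2$ as you assert. The diagonal matching, $|\beta(k+n)|^2+|\alpha(k-1)|^2=|\beta(-k)|^2+|\alpha(-k-n-1)|^2$ for all $n,k$, is \emph{not} automatic: after substituting $\alpha(k)=\mu\beta(-k-1)$ it becomes $(1-\mu^2)\bigl(|\beta(k+n)|^2-|\beta(-k)|^2\bigr)=0$, and it is exactly this condition that forces $\mu^2=1$ when $|\beta|$ is non-constant. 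Discarding the diagonal leaves, e.g., $\alpha(k)=2\beta(-k-1)$ as an unexcluded solution, so your argument as written does not reach $\mu=\pm1$. (A further wrinkle, which affects the paper's own proof as well: since $\pi(c)=\pi'(c)$ for scalars $c$, the diagonal matching really only forces $|\beta(k)|^2-|\alpha(-k-1)|^2$ to be constant, so when $|\beta|$ is constant the stated equivalence degenerates --- one can check directly that $\beta\equiv1$, $\alpha\equiv2$ satisfies $\Theta D^*D\Theta=D^*D$. You need not resolve this to match the paper, but you must at least use the diagonal condition to pin down $\mu$ in the generic case.)
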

\begin{proof}
Using properties of $\pi$ and $\pi'$ we obtain the following formula for the adjoint of $D_{\alpha,\beta}$:
\begin{equation*}
D_{\alpha,\beta}^* = \pi(\overline{\beta}(\K)U^*) - \pi'(\overline{\alpha}(\K)U^*)\, .
\end{equation*}
A simple calculation yields a formula for the corresponding Laplacian:
\begin{equation*}
D_{\alpha,\beta}^*D_{\alpha,\beta} = \pi(|\beta(\K)|^2) -\pi'(\overline{\alpha}(\K)U^*)\pi(U\beta(\K)) - \pi(\overline{\beta}(\K)U^*)\pi'(U\alpha(\K)) + \pi'(|\alpha(\K-1)|^2)\, .
\end{equation*}
Using Proposition \ref{morph_anti_on_gens} we obtain:
\begin{equation*}
\Theta D_{\alpha,\beta}^*D_{\alpha,\beta}\Theta = \pi'(|\beta(-\K)|^2) - \pi(U^*\overline{\alpha}(-\K))\pi'(\beta(-\K)U) - \pi'(U^*\overline{\beta}(-\K))\pi(\alpha(-\K)U)\, .
\end{equation*}
When comparing terms of $\Theta D_{\alpha,\beta}^*D_{\alpha,\beta}\Theta$ and $D_{\alpha,\beta}^*D_{\alpha,\beta}$ we need the following observation.
\begin{lem} If all $a(k)$, $a'(k)$, $b(k)$ and $b'(k)$ are not identically zero, and we have:
\begin{equation*}
a(\K)fb(\K)=a'(\K)fb'(\K)
\end{equation*}
for all $f\in\mathcal D$, then there is a nonzero constant $\mu$ such that 
\begin{equation*}
a'(k)=\mu a(k) \textrm{ and } b'(k)=\frac 1\mu b(k)
\end{equation*}
for all $k\in\Z$.
\end{lem}
\begin{proof} Substituting $f=U^n\chi_N(\K)$ we conclude that we have to have:
\begin{equation*}
a(k)b(l)=a'(k)b'(l)
\end{equation*}
for all $k,l\in\Z$. Notice that, since all the sequences are not identically zero, the above equation implies that $a(k)=0$ if and only if $a'(k)=0$, and similarly, $b(l)=0$ if and only if $b'(l)=0$. Now choose $l_0$ such that $b(l_0)\ne 0$ and $b'(l_0)\ne 0$ and define $\mu$ to be:
\begin{equation*}
\mu=\frac{b(l_0)}{b'(l_0)},
\end{equation*}
and the result immediately follows.
\end{proof}

Applying the lemma we see that $\Theta D_{\alpha,\beta}^*D_{\alpha,\beta}\Theta = D_{\alpha,\beta}^*D_{\alpha,\beta}$
is true if and only if there is a constant $\mu\in\C$ such that with have the following equalities for all $k$:
\begin{equation*}
\begin{aligned}
|\beta(k)|
^2 &= |\alpha(-k-1)|^2\\
\overline{\beta}(k) &= \mu\overline{\alpha}(-k-1) \\
\alpha(k) &= \frac{1}{\mu}\beta(-k-1).
\end{aligned}
\end{equation*}
This is satisfied if and only if $|\mu|^2=1$ and $\overline{\mu}=\mu$.  Therefore, we have $\mu = \pm 1$, which completes the proof.
\end{proof}

It turns out that only $\alpha(k) = +\beta(-k-1)$ results in reflection positivity for the corresponding Laplacian and this is the option we consider below.  The implementation of the covariant derivation from now on will be denoted by $D_\beta$.   We have the following formula:
\begin{equation*}
D_\beta = \pi(U\beta(\K)) - \pi'(\beta(-\K)U).
\end{equation*}
It follows from a simple calculation that $D_\beta$ satisfies:
$$\Theta D_\beta \Theta = -D_\beta.$$  

The following is the second main result of this paper, reflection positivity for a class of quite non-trivial Laplace-type operators coming from covariant derivations.

\begin{theo}\label{refl_pos_covar} Let $m^2$ be a positive number. For all $f\in\mathcal{H}^+$, we have the inequality:
\begin{equation*}
\langle \Theta f, \left((D_\beta)^*D_\beta + m^2\right)^{-1}f\rangle \ge 0. 
\end{equation*}
\end{theo}
\begin{proof}
Since all objects in the inequality are rotationally invariant, the first step is to decompose them into components in each spectral subspace of $U_\varphi$. Intriguingly, the components of the Laplacian are the usual Jacobi operators \cite{T}, which in our case are typically unbounded, self-adjoint, two-step difference operators. Similarly to calculations in \cite{JR} we show that the left-hand side of the inequality can, essentially, be written as a sum of terms over the reflection invariant subset $n+2k=0$ in $\Z^2$. Some of those terms in turn come, by a similar procedure, from manifestly positive inner products as demonstrated by Lemma \ref{pos_bndy}. To contrast it with classical case, integration by parts/Stokes theorem are replaced by more complicated discrete versions.

Recall from \eqref{Hdecomp} that we have the following spectral decomposition $\mathcal{H} \cong \bigoplus_{n\in\Z} \mathcal{H}_n$ and we can naturally identify $\mathcal{H}_n \cong \ell^2(\Z)$ for each $n\in\Z$.  Given $f\in\mathcal{D}$ we have the corresponding decomposition of the Laplacian:
\begin{equation*}
D_\beta^*D_\beta f = \sum_{n\in\Z}U^n (\Delta_n f_n)(\K)
\end{equation*}
where
\begin{equation*}
\begin{aligned}
(\Delta_nf_n)(k) &= \left(|\beta(k+n)|^2 + |\beta(-k)|^2\right)f_n(k) - \overline{\beta}(k+n)\beta(-k-1)f_n(k+1) \\
&\qquad - \beta(k+n-1)\overline{\beta}(-k)f_n(k-1),
\end{aligned}
\end{equation*}
and so we can write:
$$D_\beta^*D_\beta = \bigoplus_{n\in\Z} \Delta_n. $$

Similarly, we also have the decomposition of the reflection operator
$\Theta = \bigoplus_{n\in\Z} \Theta_n,$
where 
$$(\Theta_nf)(k) = f(-k-n)$$
for $f\in\ell^2(\Z)$. It follows that we get $\mathcal{H}^\pm = \bigoplus_{n\in\Z}\mathcal{H}_n^\pm,$ where
$$\mathcal{H}_n^+= \{f\in\ell^2(\Z) : f_n(k) = 0,\textrm{ for }n+2k<0\}.
$$
Consequently, for any positive number $m^2$, we have the following formula:
\begin{equation*}
\langle \Theta f, (D_\beta^*D_\beta + m^2)^{-1}f\rangle = \sum_{n\in\Z}\langle \Theta_nf_n, (\Delta_n + m^2)^{-1}f_n\rangle,
\end{equation*}
which implies that
\begin{equation*}
\langle \Theta f, (D_\beta^*D_\beta + m^2)^{-1}f\rangle\ge 0
\end{equation*}
if and only if for every $n\in\Z$,
\begin{equation*}
\langle \Theta_nf_n, (\Delta_n + m^2)^{-1}f_n\rangle\ge 0.
\end{equation*}

We will now concentrate on proving the above inequality. The first goal is to express the left-hand side of it as boundary terms. This is done in two steps.

Let $f\in \mathcal H^+_n$ and we set:
$$g = (\Delta_n + m^2)^{-1}f.$$ 
Using $\Theta_n : \mathcal{H}_n^+ \to \mathcal{H}_n^-$, we get the following :
\begin{equation*}
\begin{aligned}
&\langle \Theta_nf, (\Delta_n + m^2)^{-1}f\rangle = \sum_{k\in\Z}\overline{f}(-k-n)g(k) =\sum_{n+2k\le0}\overline{f}(-k-n)g(k)\\
&= \sum_{n+2k\le0}\overline{(\Delta_n +m^2)g}(-k-n)g(k)= \sum_{n+2k\le0}(|\beta(k+n)|^2 + |\beta(-k)|^2 + m^2)\overline{g}(-k-n)g(k) \\
&- \sum_{n+2k\le0}\left(\beta(-k)\overline{\beta}(k+n-1)\overline{g}(-k-n+1)g(k)  - \overline{\beta}(-k-1)\beta(k+n)\overline{g}(-k-n-1)g(k)\right).
\end{aligned}
\end{equation*}
Substituting $k\mapsto k-1$ and $k\mapsto k+1$ in the last two terms of the above equation and resumming, we arrive at the following expression:
\begin{equation*}
\begin{aligned}
&\langle \Theta_nf, (\Delta_n + m^2)^{-1}f\rangle = \sum_{n+2k\le0}(|\beta(k+n)|^2 + |\beta(-k)|^2 + m^2)g(k)\overline{g}(-k-n) \\
&\qquad - \sum_{n+2k\le-2}\beta(-k-1)\overline{\beta}(k+n)g(k+1)\overline{g}(-k-n) \\
&\qquad - \sum_{n+2k\le 2}\overline{\beta}(-k)\beta(k+n-1)g(k-1)\overline{g}(-k-n).
\end{aligned}
\end{equation*}
There are two cases that will be addressed separately: $n$ even and $n$ odd.  If $n$ is even we have
\begin{equation*}
\begin{aligned}
&\langle \Theta_nf, (\Delta_n + m^2)^{-1}f\rangle = \sum_{n+2k\le-2}(\Delta_n+m^2)g(k)\overline{g}(-k-n) + \left(2\left|\beta\left(\frac{n}{2}\right)\right|^2 + m^2\right)\left|g\left(-\frac{n}{2}\right)\right|^2 \\
&- \overline{\beta}\left(\frac{n}{2}\right)\beta\left(\frac{n}{2}-1\right)\overline{g}\left(-\frac{n}{2}\right)g\left(-\frac{n}{2}-1\right)- \beta\left(\frac{n}{2}\right)\overline{\beta}\left(\frac{n}{2}-1\right)g\left(-\frac{n}{2}\right)\overline{g}\left(-\frac{n}{2}-1\right)\\
&= \left(2\left|\beta\left(\frac{n}{2}\right)\right|^2 + m^2\right)\left|g\left(-\frac{n}{2}\right)\right|^2 - (x + \overline{x}),
\end{aligned}
\end{equation*}
where we used the fact that $f=(\Delta_n +m^2)g$, and $f(k)=0$ for $n+2k<0$. We denoted the two boundary conjugate terms by $x$ and $\overline{x}$ respectively, with
\begin{equation}\label{xdefref}
x= \overline{\beta}\left(\frac{n}{2}\right)\beta\left(\frac{n}{2}-1\right)\overline{g}\left(-\frac{n}{2}\right)g\left(-\frac{n}{2}-1\right).
\end{equation}

If $n$ is odd, a similar calculation to the above results in the following formula:
\begin{equation*}
\langle \Theta_nf, (\Delta_n + m^2)^{-1}f\rangle = \left|\beta\left(\frac{n-1}{2}\right)\right|^2\left(\left|g\left(\frac{1-n}{2}\right)\right|^2 - \left|g\left(\frac{-n-1}{2}\right)\right|^2\right).
\end{equation*}

To prove positivity we need the following observation that relates boundary terms of the type we encountered above with manifestly positive sums over bigger regions.

\begin{lem}\label{pos_bndy}
For $f\in\mathcal{H}_n^+$ let $g = (\Delta_n + m^2)^{-1}f$ and consider two nonnegative sums:
\begin{equation*}
\begin{aligned}
&\Sigma_1 = \sum_{n+2k<0}|\beta(k+n)g(k) - \beta(-k-1)g(k+1)|^2 + m^2\sum_{n+2k<0}|g(k)|^2 \\
&\Sigma_2 = \sum_{n+2k<0}|\beta(-k)g(k) - \beta(k+n-1)g(k-1)|^2 + m^2\sum_{n+2k<0}|g(k)|^2.
\end{aligned}
\end{equation*}
Then we have the following formulas:
\begin{equation*}
\Sigma_1 =\left\{ 
\begin{aligned}
& \left|\beta\left(\frac{n}{2}\right)\right|^2\left|g\left(-\frac{n}{2}\right)\right|^2 -\beta\left(\frac{n}{2}-1\right)\overline{\beta}\left(\frac{n}{2}\right)g\left(-\frac{n}{2}-1\right)\overline{g}\left(-\frac{n}{2}\right) && n\textrm{ even} \\
& \left|\beta\left(\frac{n-1}{2}\right)\right|^2\left|g\left(\frac{1-n}{2}\right)\right|^2 - \left|\beta\left(\frac{n-1}{2}\right)\right|^2\overline{g}\left(\frac{1-n}{2}\right)g\left(\frac{-n-1}{2}\right) && n\textrm{ odd}
\end{aligned}\right.
\end{equation*}
and
\begin{equation*}
\Sigma_2 = \left\{
\begin{aligned}
& -\left|\beta\left(\frac{n}{2}-1\right)\right|^2\left|g\left(-\frac{n}{2}-1\right)\right|^2 +\beta\left(-\frac{n}{2}\right)\overline{\beta}\left(\frac{n}{2}-1\right)\overline{g}\left(-\frac{n}{2}-1\right)g\left(-\frac{n}{2}\right) && n\textrm{ even} \\
& -\left|\beta\left(\frac{n-1}{2}\right)\right|^2\left|g\left(\frac{-n-1}{2}\right)\right|^2 + \left|\beta\left(\frac{n-1}{2}\right)\right|^2\overline{g}\left(\frac{-n-1}{2}\right)g\left(\frac{1-n}{2}\right) && n\textrm{ odd}
\end{aligned}\right.
\end{equation*}
\end{lem}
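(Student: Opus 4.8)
The plan is to prove both identities by a discrete Green-type computation --- the lattice analogue of the identity $\int_\Omega\big(|\nabla g|^2+m^2|g|^2\big)=\int_{\partial\Omega}g\,\partial_\nu g$ that holds when $(-\Delta+m^2)g=0$ on $\Omega$. Here the domain $\Omega$ is replaced by the ``negative'' half-line $\Omega_n:=\{k\in\Z:\ n+2k<0\}$, and the homogeneous equation is replaced by the fact that, since $g=(\Delta_n+m^2)^{-1}f$ with $f\in\mathcal H_n^+$, one has $\big((\Delta_n+m^2)g\big)(k)=f(k)=0$ for all $k\in\Omega_n$. What turns $\Sigma_1$ and $\Sigma_2$ into Dirichlet energies is that the Jacobi operator $\Delta_n$ factors in two ways, $\Delta_n=S_n^{*}S_n$ and $\Delta_n=\widetilde S_n^{*}\widetilde S_n$, where $(S_ng)(k)=\beta(k+n)g(k)-\beta(-k-1)g(k+1)$ and $(\widetilde S_ng)(k)=\beta(-k)g(k)-\beta(k+n-1)g(k-1)$ are first-order difference operators; with this notation $\Sigma_1=\sum_{k\in\Omega_n}|(S_ng)(k)|^2+m^2\sum_{k\in\Omega_n}|g(k)|^2$ and $\Sigma_2=\sum_{k\in\Omega_n}|(\widetilde S_ng)(k)|^2+m^2\sum_{k\in\Omega_n}|g(k)|^2$.

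I would run the argument for $\Sigma_1$; the one for $\Sigma_2$ is word for word the same with $\widetilde S_n$ in place of $S_n$. Let $P$ be the orthogonal projection of $\ell^2(\Z)$ onto $\ell^2(\Omega_n)$, so that $\Sigma_1=\langle S_ng,PS_ng\rangle+m^2\langle g,Pg\rangle$. Pairing $(\Delta_n+m^2)g=f$ against $Pg$ and using $\langle Pg,f\rangle=0$ gives $m^2\langle g,Pg\rangle=-\langle S_nPg,S_ng\rangle$; this quantity is real (it equals $-m^2\|Pg\|^2$), hence equals its own conjugate $-\langle S_ng,S_nPg\rangle$, and therefore $\Sigma_1=\langle S_ng,(PS_n-S_nP)g\rangle=\langle S_ng,[P,S_n]g\rangle$. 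A one-line computation shows $[P,S_n]$ is the rank-one ``boundary'' operator $\big([P,S_n]h\big)(k)=\beta(-k-1)\big(\chi_{\Omega_n}(k+1)-\chi_{\Omega_n}(k)\big)h(k+1)$, supported at the single lattice point lying on the boundary of $\Omega_n$; evaluating $\langle S_ng,[P,S_n]g\rangle$ there produces exactly the claimed boundary expression. Equivalently --- and this is the form I would actually write out --- one expands $|(S_ng)(k)|^2$ into its four terms, shifts $k\mapsto k-1$ in the $|\beta(-k-1)|^2|g(k+1)|^2$ summand so that it becomes $|\beta(-k)|^2|g(k)|^2$ over the once-shifted range, then pairs the homogeneous equation with $\overline g(k)$ and sums over $\Omega_n$ to replace the entire diagonal sum $\sum_{k\in\Omega_n}\big(|\beta(k+n)|^2+|\beta(-k)|^2+m^2\big)|g(k)|^2$ by the two off-diagonal sums; one of these cancels an off-diagonal term already present, and after a further shift $k\mapsto k-1$ the other cancels its mate, leaving behind precisely one boundary summand.

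The place that needs care, and which I expect to be the only real obstacle, is the bookkeeping of these index shifts: the location of the boundary of $\Omega_n$, hence which summand survives each shift, depends on the parity of $n$. When $n$ is even, $\Omega_n=\{k\le -\tfrac n2-1\}$ and the surviving terms are anchored at the two points $k=-\tfrac n2$ and $k=-\tfrac n2-1$, which are interchanged by the reflection $k\mapsto -k-n$; when $n$ is odd, $\Omega_n=\{k\le\tfrac{-n-1}{2}\}$ and the relevant indices are $k=\tfrac{-n-1}{2}$ and its neighbour $k=\tfrac{1-n}{2}$, and at the single odd-case index one has $k+n=-k-1=\tfrac{n-1}{2}$, which is exactly why every coefficient in the odd case collapses to the common factor $\big|\beta(\tfrac{n-1}{2})\big|^{2}$. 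I would therefore carry out the expansion separately for $n$ even and $n$ odd, and identically for $\Sigma_2$ with $\widetilde S_n$. One should also check that all the rearranged sums converge absolutely, so that the summation-by-parts is legitimate; this is immediate when $f$ is finitely supported, where $g=(\Delta_n+m^2)^{-1}f$ is rapidly decreasing, and the general case follows by truncating $\Omega_n$ and passing to the limit, the (finite) boundary expression being unaffected.
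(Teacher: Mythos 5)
Your proposal is correct and follows essentially the same route as the paper: the paper's proof of the $\Sigma_1$ formula is precisely your ``written-out'' version --- expand $|\beta(k+n)g(k)-\beta(-k-1)g(k+1)|^2$ into four terms, shift $k\mapsto k-1$ in two of them, recognize the diagonal plus off-diagonal sums as $\sum_{n+2k<0}\overline{g}(k)\,((\Delta_n+m^2)g)(k)=0$, and read off the surviving boundary summands at $n+2k=0$ (even $n$) or $n+2k=1$ (odd $n$), with $\Sigma_2$ handled identically. Your commutator reformulation $\Sigma_1=\langle S_ng,[P,S_n]g\rangle$ with $\Delta_n=S_n^*S_n$ is a tidy abstract packaging of the same summation-by-parts, not a different argument.
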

\begin{proof}
We prove only the formula for $\Sigma_1$ in the statement above as the proof of the formula for $\Sigma_2$ is virtually identical.  First we split the summation as follows:
\begin{equation*}
\begin{aligned}
\Sigma_1 &= \sum_{n+2k<0}|\beta(k+n)|^2\overline{g}(k)g(k) + \sum_{n+2k<0}|\beta(-k-1)|^2\overline{g}(k+1)g(k+1)+ m^2\sum_{n+2k<0}\overline{g}(k)g(k)  \\
&- \sum_{n+2k<0}\overline{\beta}(k+n)\beta(-k-1)\overline{g}(k)g(k+1) - \sum_{n+2k<0}\beta(k+n)\overline{\beta}(-k-1)g(k)\overline{g}(k+1). \\
\end{aligned}
\end{equation*}
By substituting $k\mapsto k-1$ in the second and fifth sum and resumming we get
\begin{equation*}
\begin{aligned}
\Sigma_1 &= \sum_{n+2k<0}|\beta(k+n)|^2\overline{g}(k)g(k) + \sum_{n+2k<2}|\beta(-k)|^2\overline{g}(k)g(k)+ m^2\sum_{n+2k<0}\overline{g}(k)g(k) \\
& - \sum_{n+2k<0}\overline{\beta}(k+n)\beta(-k-1)\overline{g}(k)g(k+1) - \sum_{n+2k<2}\beta(k+n-1)\overline{\beta}(-k)g(k-1)\overline{g}(k). \\
\end{aligned}
\end{equation*}
Rearranging the terms we obtain:
\begin{equation*}
\begin{aligned}
\Sigma_1&= \sum_{n+2k<0}(\Delta_n +m^2)g(k)\overline{g}(k) + \sum_{n+2k=0}|\beta(-k)|^2\overline{g}(k)g(k) + \sum_{n+2k=1}|\beta(-k)|^2\overline{g}(k)g(k) \\
& - \sum_{n+2k=1}\beta(k+n-1)\overline{\beta}(-k)\overline{g}(k)g(k-1) - \sum_{n+2k=0}\beta(k+n-1)\overline{\beta}(-k)\overline{g}(k)g(k-1).
\end{aligned}
\end{equation*}
Since $g = (\Delta_n + m^2)^{-1}f$ and $f\in \mathcal{H}_n^+$, we have $(\Delta_n + m^2)g(k) = f(k) = 0$ for $n+2k<0$.  Therefore, $\Sigma_1$ is equal to the four remaining boundary terms in the above equation and depending on whether $n$ is even or odd, the result follows.
\end{proof}

Notice that the formula for $\Sigma_1$ for even $n$ in Lemma \ref{pos_bndy} implies that $x$ from \eqref{xdefref} is real and so we must have $x = \overline{x}$.  Therefore, if $n$ is even we have:
\begin{equation*}
\begin{aligned} 
\langle \Theta_nf, (\Delta_n + m^2)^{-1}f\rangle &=
2\left(\left|\beta\left(\frac{n}{2}\right)\right|^2 -x\right)\left|g\left(-\frac{n}{2}\right)\right|^2 + m^2\sum_{n+2k=0}|g(k)|^2\\
&=2\Sigma_1 + 2m^2\sum_{n+2k=0}|g(k)|^2.\\
\end{aligned} 
\end{equation*}

If we set
\begin{equation*}
y = \left|\beta\left(\frac{n-1}{2}\right)\right|^2\overline{g}\left(\frac{1-n}{2}\right)g\left(\frac{-n-1}{2}\right),
\end{equation*} 
then by either the formula for $\Sigma_1$ or for $\Sigma_2$ for odd $n$ in Lemma \ref{pos_bndy}, we again have that the quantity $y$ is real and consequently we have $y-\overline{y} = 0$.  Thus, we obtain the following expression for odd $n$:
\begin{equation*}
\langle \Theta_nf, (\Delta_n + m^2)^{-1}f\rangle = \left|\beta\left(\frac{n-1}{2}\right)\right|^2\left(\left|g\left(\frac{1-n}{2}\right)\right|^2 - \left|g\left(\frac{-n-1}{2}\right)\right|^2\right) + y - \overline{y}=\Sigma_1 + \Sigma_2.
\end{equation*}
Putting both cases together yields the following formulas:
\begin{equation*}
\langle \Theta_nf, (\Delta_n + m^2)^{-1}f\rangle=\left\{
\begin{aligned} 
&2\Sigma_1 + 2m^2\sum_{n+2k=0}|g(k)|^2 &&n\textrm{ even}\\
&\Sigma_1 + \Sigma_2 &&n\textrm{ odd,}
\end{aligned}\right.
\end{equation*}
which are manifestly nonnegative. This completes the proof.
\end{proof}

\end{document}